\DeclareMathAlphabet{\mathcalligra}{T1}{calligra}{m}{n}
\newtheorem{theorem}{Theorem}
\newtheorem{lemma}[theorem]{Lemma}
\newtheorem{proposition}[theorem]{Proposition}
\newtheorem*{theorem*}{Theorem}
\theoremstyle{definition}
\newtheorem{definition}[theorem]{Definition}
\newtheorem*{remark}{Remark}
\newtheorem*{remarks}{Remarks}
\newcommand{\R}{\mathbb{R}}
\newcommand{\Q}{\mathbb{Q}}
\newcommand{\Z}{\mathbb{Z}}
\newcommand{\N}{\mathbb{N}}
\newcommand{\C}{\mathbb{C}}
\renewcommand{\H}{\mathbb{H}}
\newcommand{\F}{\mathbb{F}}
\newcommand{\tr}{\operatorname{tr}}
\newcommand{\sgn}{\operatorname{sgn}}
\newcommand{\Aut}{\operatorname{Aut}}
\newcommand{\re}{\operatorname{Re}}
\newcommand{\im}{\operatorname{Im}}
\newcommand{\SL}{{\text {\rm SL}}}
\newcommand{\ST}{\operatorname{ST}}
\renewcommand{\pmod}[1]{\  \,  \left(  \operatorname{mod} \,  #1 \right)}
\newcommand{\Pmod}[1]{\  \,  (  \operatorname{mod} \,  #1 )}
\DeclareMathAlphabet{\mathpzc}{OT1}{pzc}{m}{it}
\numberwithin{equation}{section}
\numberwithin{theorem}{section}
\begin{document}
\title[Sato--Tate in arithmetic progressions]{Odd moments for the trace of Frobenius and the Sato--Tate conjecture in arithmetic progressions}
\author{Kathrin Bringmann}
\address{University of Cologne, Department of Mathematics and Computer Science, Weyertal 86-90, 50931 Cologne, Germany}
\email{kbringma@math.uni-koeln.de}
\author{Ben Kane}
\address{Department of Mathematics, University of Hong Kong, Pokfulam, Hong Kong}
\email{bkane@hku.hk}
\author{Sudhir Pujahari}
\address{Institute of Mathematics, Warsaw University, Banacha 2, 02-97 Warsaw, Poland. 
 \ \ \ \ \ \ \ \ \ \ \ \ \ \ \  \ \ \ \ \ \ \ \ \ \ \ \ \ \ \ \ \ \ \ \ \ \ \ \ \ \ \ \ \ \ \ \ \ \ \ \ \ \ \ \ \ \ \ \ \ \ \ \ \ \ \
\\
School of Mathematical Sciences, National Institute of Science Education and Research, Bhubaneswar, HBNI, P. O. Jatni, Khurda 752050, Odisha, India }
\email{spujahari@mimuw.edu.pl/spujahari@gmail.com}

\keywords{elliptic curves, holomorphic projection, Hurwitz class numbers, trace of Frobenius} 
\subjclass[2020]{11E41, 11F27,11F37, 11G05}
\date{\today}
\begin{abstract}
In this paper, we consider the moments of the trace of Frobenius of elliptic curves if the trace is restricted to a fixed arithmetic progression. We determine the asymptotic behavior for the ratio of the $(2k+1)$-th moment to the zeroeth moment as the size of the finite field $\F_{p^r}$ goes to infinity. These results follow from similar asymptotic formulas relating sums and moments of Hurwitz class numbers where the sums are restricted to certain arithmetic progressions. As an application, we prove that the distribution of the trace of Frobenius in arithmetic progressions is equidistributed with respect to the Sato--Tate measure.
\end{abstract}
\maketitle

\section{Introduction and statement of results}\label{sec:intro}

For an elliptic curve $E$ over the finite field $\F_{p^r}$ with $p^r$ elements, there is a natural endomorphism called the Frobenius map. Letting $\tr(E):=p^r+1-\#E(\F_{p^r})$ be the {\it trace of Frobenius}, Hasse \cite{Hasse} proved that $|\tr(E)|\le 2p^{\frac r2}$. Hence
\[
x_E:=\frac{\tr(E)}{2p^{\frac r2}}\in[-1,1].
\]
It is natural to investigate the distribution of $x_{E}\in [-1,1]$ as $p^r\to\infty$. Birch \cite{Birch} related this distribution to the \begin{it}Sato--Tate measure\end{it} $\mu_{\ST}(X):=\int_{X} f_{\ST}(x)dx$, where $X\subseteq\R$ is a Lebesgue measurable set and
\[
f_{\operatorname{ST}}(x):= \begin{cases}
\frac{2}{\pi}\sqrt{1-x^2} & \text{if }x \in [-1,1], \\
0 & \text{otherwise}.
\end{cases}
\]
Specifically, letting $E_{\alpha,\beta}$ denote the elliptic curve in Weierstrass form $y^2=x^3-\alpha x -\beta$ for $\alpha,\beta\in\F_p$, Birch \cite{Birch} proved that for $-1\leq a\leq b\leq 1$ we have
\[
\lim_{p\to\infty} \operatorname{Pr}\left(a\leq x_{E_{\alpha,\beta}}\leq b: \alpha,\beta\in \F_p\right) = \mu_{\ST}([a,b]).
\]
Here, as usual, for a finite set $\mathcal{E}$ and a statement  $\mathcal{S}(x)$ (which is true or false for each $x\in\mathcal{E}$), we define the probability of the event $\mathcal{S}$ in the sample space $\mathcal{E}$  to be
\[
\operatorname{Pr}\left(\mathcal{S}(x):x\in \mathcal{E}\right):=\frac{\#\left\{x\in \mathcal{E}: \mathcal{S}(x)\text{ holds}\right\}}{\#\mathcal{E}}.
\]
We say that the set of $x_E$ for $E\in \{E_{\alpha,\beta}:\alpha,\beta\in\F_p\}$ becomes \begin{it}equidistributed\end{it} with respect to the Sato--Tate measure as $p\to\infty$. Birch's result has been extended to the case where $\F_p$ is replaced with an arbitrary finite field $\F_{p^r}$ (see the work of Brock and Granville \cite{BrockGranville} as well as \cite{KatzSarnak}). Instead of considering the sample space defined by the coefficients of the cubic polynomial, we investigate subsets of the set $\mathcal{E}(p^r)$ of isomorphism classes of elliptic curves over $\F_{p^r}$. Specifically, for $m\in\Z$ and $M\in \N:=\{1,2,3,\dots\}$, we generalize Birch's result by considering the sample space
\[
\mathcal{E}_{m,M}(p^r):=\{E\in\mathcal{E}(p^r):\tr(E)\equiv m\pmod{M}\}.
\]
\begin{theorem}\label{thm:SatoTateArithmetic}
	Let $m\in\Z$ and $M,r\in\N$. Restricting to $E\in\mathcal{E}_{m,M}$ over $\F_{p^r}$, the normalized traces of Frobenius $x_E$ are equidistributed with respect to the Sato--Tate measure in the interval $[-1,1]$ as $p \to \infty$. Specifically, we have for $-1\leq a\leq b\leq 1$
\rm
	\[
		\lim_{p\to\infty} \operatorname{Pr}\left(a\leq x_E\leq b: E\in\mathcal{E}_{m,M}(p^r)\right) = \mu_{\ST}([a,b]).
	\]
\end{theorem}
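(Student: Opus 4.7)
The plan is to apply the method of moments. The Sato--Tate measure $\mu_{\ST}$ is compactly supported on $[-1,1]$ and is thus uniquely determined by its sequence of moments, so by the Weierstrass approximation theorem the equidistribution statement reduces to showing that for every integer $n\ge 0$,
\[
\lim_{p\to\infty} \frac{1}{\#\mathcal{E}_{m,M}(p^r)} \sum_{E\in\mathcal{E}_{m,M}(p^r)} x_E^n = \int_{-1}^{1} x^n f_{\ST}(x)\,dx.
\]
Grouping curves by trace and invoking the Deuring--Eichler correspondence rewrites the numerator, up to the normalization $(2p^{r/2})^{-n}$ and a negligible supersingular correction, as the restricted Hurwitz class number sum
\[
\sum_{\substack{|t|\le 2p^{r/2}\\ t\equiv m\,(\mathrm{mod}\,M)}} t^n\, H\!\left(4p^r-t^2\right).
\]

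For odd $n=2k+1$, the target Sato--Tate moment vanishes by symmetry, and convergence of the normalized sum to $0$ is precisely the content of the asymptotic formula for the ratio of the $(2k+1)$-th restricted moment to the zeroeth restricted moment established in the paper. This is the main analytic input the paper provides, so nothing further is needed in this case beyond quoting the theorem.

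For even $n=2k$, I would detect the congruence via orthogonality of additive characters modulo $M$,
\[
\mathbbm{1}[t\equiv m\,(\mathrm{mod}\,M)] = \frac{1}{M}\sum_{a=0}^{M-1} e^{2\pi i a(t-m)/M}.
\]
The $a=0$ term contributes the unrestricted Hurwitz sum divided by $M$, whose Eichler--Selberg trace-formula evaluation underlies the classical unrestricted Sato--Tate theorems of Birch \cite{Birch} and Brock--Granville \cite{BrockGranville}/Katz--Sarnak \cite{KatzSarnak}, yielding the correct even Sato--Tate moment after normalization by $\#\mathcal{E}(p^r)/M\sim \#\mathcal{E}_{m,M}(p^r)$. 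The twisted sums at $a\ne 0$ are expressible, again via Eichler--Selberg, in terms of traces of Hecke operators on cusp forms of weight $2k+2$ and level divisible by $M$; Deligne's bound then shows they are of strictly lower order and do not perturb the limit. The zeroeth-moment normalization $\#\mathcal{E}_{m,M}(p^r)\sim \#\mathcal{E}(p^r)/M$ is obtained by the same character decomposition with $n=0$, where again the $a=0$ piece dominates.

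The principal obstacle is obtaining uniform control of the nontrivial twisted class number sums in the restricted setting; here the paper's restricted Hurwitz class-number asymptotics, combined with the trace-formula framework, is precisely what is needed to close the estimate. With these inputs in place, the moment method concludes the proof cleanly.
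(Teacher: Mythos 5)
Your overall skeleton --- the method of moments, with the odd moments killed by the bound on $S_{2k+1,m,M}(p^r)$ --- is exactly the paper's approach, and your reduction of the odd case to Theorem \ref{thm:EllipticOdd} is fine. One small point: the Deuring correspondence produces the automorphism-weighted counts, so you also need the observation (made in Lemma \ref{lem:weightedunweighted}) that only $O(1)$ classes have $\omega_E\neq 2$; this is a separate correction from the supersingular one, though both are indeed negligible.

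The genuine gap is in your treatment of the even moments. You claim that after detecting the congruence by additive characters, the $a\neq 0$ twisted sums are of strictly lower order, and correspondingly that $\#\mathcal{E}_{m,M}(p^r)\sim \#\mathcal{E}(p^r)/M$. Both claims are false: the trace of Frobenius is \emph{not} equidistributed modulo $M$. This is precisely the content of Castryck--Hubrechts and of the paper's own Theorem \ref{thm:SatoTateArithmetic2} and Proposition \ref{prop:limexists}, where $S_{m,M}(p)=a_{j,m,M}\,p+O_{j,m,M}(p^{1/2+\varepsilon})$ with a leading constant depending on $j\equiv p\Pmod{4M^2}$ that is in general not $a_{j,1,1}/M$ (the paper remarks explicitly that the ratios $c_{j,m,M}$ depend on $j$). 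Deligne's bound does not rescue the twisted sums because the relevant generating functions are weight-two (quasi)modular forms whose \emph{Eisenstein} components survive the twist; only the cuspidal part is $O(p^{1/2+\varepsilon})$, and the Eisenstein pieces attached to nontrivial characters contribute at the full main-term order (and analogously at order $p^{r(k+1)}$ for the $2k$-th moment). The correct input --- and what the paper actually uses --- is the restricted-to-restricted asymptotic $S_{2k,m,M}(p^r)=\tfrac{C_k}{2^{2k}}\cdot 2^{2k}p^{rk}S_{m,M}(p^r)\bigl(1+O(p^{-1/2+\varepsilon})\bigr)$ from \cite[Theorem 1.1]{BKP}, quoted as \eqref{eqn:evenEllipticpr}, which normalizes the restricted $2k$-th moment by the restricted \emph{zeroth} moment so that the non-uniform densities cancel. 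Your final answer $C_k/2^{2k}$ happens to be correct only because your two false approximations (numerator and denominator each off by the same nontrivial factor) would cancel, but the argument as written does not establish that cancellation.
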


It is also natural to consider the analogous question for the probabilities
\begin{equation}\label{eqn:Prgeneral}
\operatorname{Pr}\left(a\leq x_E\leq b\text{ and }E\in \mathcal{E}_{m,M}(p^r): E\in \mathcal{E}(p^r)\right)
\end{equation}
on the larger sample space $\mathcal{E}(p^r)$. Since the distribution of $x_E$ in $[-1,1]$ is the same on the sample spaces $\mathcal{E}(p^r)$ and $\mathcal{E}_{m,M}(p^r)$ as $p\to\infty$, Theorem \ref{thm:SatoTateArithmetic} shows that as $p\to\infty$ the set of $x_E$ becomes equidistributed independently of the distribution of $\tr(E)$  modulo $M$. The distribution of $\tr(E)$ modulo $M$ was studied by Castryck and Hubrechts \cite{CastryckHubrechts}. In particular, for fixed $m$ and $M$, they determined \cite[Theorem 1]{CastryckHubrechts} the asymptotic behavior of
\begin{equation}\label{eqn:PrEs}
\operatorname{Pr}\left(E\in \mathcal{E}_{m,M}(p^r): E\in \mathcal{E}(p^r)\right),
\end{equation}
up to an error of size $O_M(p^{-\frac{r}{2}})$. Given the independence of the distribution of $x_E$ and the distribution of $\tr(E)$ modulo $M$ as $p\to\infty$, \eqref{eqn:Prgeneral} converges to \eqref{eqn:PrEs} times the Sato--Tate distribution. While the limit as $p\to\infty$ of \eqref{eqn:PrEs} (and hence \eqref{eqn:Prgeneral}) does not exist in general, the limit exists under certain restrictions. We list one such case in the following theorem.
 
\begin{theorem}\label{thm:SatoTateArithmetic2}
	Let $m\in\Z$ and $M\in\N$. Then for each $j\in\Z$ with $\gcd(j,2M)=1$ there exists $c_{j,m,M} \in \R_{>0}$ such that the normalized traces $x_E$ ($E\in\mathcal{E}_{m,M}(p)$) are each equidistributed in $[-1,1]$ within the larger space $\mathcal{E}(p)$ with respect to the measure $c_{j,m,M} \mu_{\operatorname{ST}}$ as $p\to\infty$ with $p\equiv j\Pmod{4M^2}$. Specifically, for $-1\leq a\leq b\leq 1$ we have 
	\[
		\lim_{\substack{p\to\infty\\ p\equiv j\pmod{4M^2}}} \operatorname{Pr}\left(a\leq x_E\leq b\text{ and }E\in \mathcal{E}_{m,M}(p):E\in \mathcal{E}(p)\right) = c_{j,m,M}\mu_{\ST}([a,b]).
	\]
\end{theorem}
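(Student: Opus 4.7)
The plan is to factor the joint probability as a product of two probabilities that can be handled by two independent inputs. Provided $\mathcal{E}_{m,M}(p)$ is non-empty, the elementary identity $\operatorname{Pr}(A\cap B:\Omega)=\operatorname{Pr}(A:B)\cdot\operatorname{Pr}(B:\Omega)$ applied with $\Omega=\mathcal{E}(p)$, $B=\mathcal{E}_{m,M}(p)$, and $A=\{E:a\le x_E\le b\}$ gives the exact identity
\[
\operatorname{Pr}\left(a\le x_E\le b\text{ and }E\in\mathcal{E}_{m,M}(p):E\in\mathcal{E}(p)\right)=\operatorname{Pr}\left(a\le x_E\le b:E\in\mathcal{E}_{m,M}(p)\right)\cdot \operatorname{Pr}\left(E\in\mathcal{E}_{m,M}(p):E\in\mathcal{E}(p)\right).
\]
Thus it suffices to analyze each factor as $p\to\infty$ with $p\equiv j\pmod{4M^2}$.

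The first factor is immediately handled by Theorem \ref{thm:SatoTateArithmetic} applied with $r=1$: it converges to $\mu_{\ST}([a,b])$, independently of the residue class of $p$ modulo $4M^2$. So all of the nontrivial work is concentrated in the second factor, which is precisely the probability that the trace condition modulo $M$ is met, independently of the Sato--Tate variable.

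For the second factor I would invoke the Castryck--Hubrechts formula \cite[Theorem~1]{CastryckHubrechts}, which gives an exact asymptotic expression for $\operatorname{Pr}(E\in\mathcal{E}_{m,M}(p):E\in\mathcal{E}(p))$ with an error $O_M(p^{-\frac{1}{2}})$. Its main term is a finite combination of Kronecker symbols and local factors attached to primes dividing $2M$, all of which are determined by the residue of $p$ modulo $4M^2$ together with the coprimality hypothesis $\gcd(p,2M)=1$ (this is exactly why the modulus $4M^2$ appears in the statement of the theorem). Restricting to $p\equiv j\pmod{4M^2}$ with $\gcd(j,2M)=1$ freezes every such ingredient, so the main term reduces to a single constant, which we define to be $c_{j,m,M}$. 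Multiplying the two limits produces the required asymptotic.

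The main obstacle is therefore essentially bookkeeping: one must extract from \cite[Theorem~1]{CastryckHubrechts} the precise dependence of its main term on $p$ and verify that every Kronecker symbol, local density, and splitting condition at primes dividing $M$ is indeed locally constant modulo $4M^2$ on the set $\{p:\gcd(p,2M)=1\}$. Once this is confirmed, the formula for $c_{j,m,M}$ can be read off explicitly, and the occasional subtlety that $\mathcal{E}_{m,M}(p)$ must be non-empty to justify the conditional probability decomposition is resolved either by appealing to the explicit positivity of $c_{j,m,M}$ (when it holds) or by observing that both sides of the claimed limit vanish if $c_{j,m,M}=0$.
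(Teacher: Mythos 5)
Your argument is correct in outline and performs the same reduction as the paper, but it sources the key input differently. The factorization $\operatorname{Pr}(A\cap B:\Omega)=\operatorname{Pr}(A:B)\cdot\operatorname{Pr}(B:\Omega)$ is exactly what the paper does, at the level of moments, in \eqref{eqn:overallratio} and in the reduction of the theorem to \eqref{eqn:SatoTateArithmetic2Equiv2}: the first factor is Theorem \ref{thm:SatoTateArithmetic} with $r=1$ (equivalently \eqref{eqn:unweightedEmMpr}), and everything then hinges on showing $\#\mathcal{E}_{m,M}(p)/\#\mathcal{E}(p)\to c_{j,m,M}$ along $p\equiv j\Pmod{4M^2}$. (Your worry about $\mathcal{E}_{m,M}(p)$ being empty is easily dispatched: \eqref{eqn:SmMlower} together with \eqref{eqn:SmMprEmMpr} gives $\#\mathcal{E}_{m,M}(p)\gg_{M,\varepsilon}p^{1-\varepsilon}$, so the conditional probability is defined for all large $p$ and in fact $c_{j,m,M}>0$.) The genuine divergence is in the second factor: you import it from \cite[Theorem 1]{CastryckHubrechts}, whereas the paper deliberately reproves this limit from scratch as Proposition \ref{prop:limexists}, by writing $S_{m,M}(p)=\tfrac12 H_{m,M}(p)+O(p^{\frac12+\varepsilon})$, realizing $H_{m,M}(p)$ as the $p$-th coefficient of a weight-two quasimodular form on $\Gamma_{4M^2,M}$, discarding the cuspidal part via Deligne's bound, and evaluating the Eisenstein part explicitly; this is what produces $c_{j,m,M}=a_{j,m,M}/a_{j,1,1}$ and is precisely the reason the modulus $4M^2$ appears (the Eisenstein coefficients $\sigma_{1,\psi,\varphi}(p)=\varphi(j)p+\psi(j)$ depend only on $p$ modulo $4M^2$). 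Your route is shorter and perfectly legitimate in principle --- the paper itself states that \cite[Theorem 1]{CastryckHubrechts} yields these limits --- but as written it defers exactly the step that carries the content, namely the verification that the Castryck--Hubrechts main term is locally constant in $p$ modulo $4M^2$ for $\gcd(p,2M)=1$; you flag this ``bookkeeping'' yourself, and until it is carried out (or replaced by an argument like Proposition \ref{prop:limexists}) the proof is not complete. In short: same decomposition, but the paper's Proposition \ref{prop:limexists} is the self-contained substitute for the verification you leave open, and it additionally buys an explicit description of $c_{j,m,M}$ in terms of Eisenstein series data rather than the local factors of \cite{CastryckHubrechts}.
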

\begin{remark}
With a little more work, one should be able to determine the limiting distribution coming from taking $p\to\infty$ in \eqref{eqn:Prgeneral} in each of the congruence classes by using the evaluation of \eqref{eqn:PrEs} in \cite[Theorem 1]{CastryckHubrechts}. 
\end{remark}

In order to investigate the distributions in Theorems \ref{thm:SatoTateArithmetic} and \ref{thm:SatoTateArithmetic2}, we use the method of moments (see Subsection \ref{sec:momentmethod}), which relies on the fact that a reasonable probability measure is determined by its moments. We investigate closely-related weighted moments that converge to the unweighted moments as $p\to\infty$ (see Subsection \ref{sec:weightedist}).  To describe the weighting, denote by $\Aut_{\F_{p^r}}(E)$ the automorphism group of $E/\F_{p^r}$ and set $\omega_{E}:=\#\Aut_{\F_{p^r}}(E)$. 
In particular, we show that for $\nu\in\N_0:=\N\cup \{0\}$ the weighted moments of $x_E$ 
\begin{equation}\label{eqn:Sratio}
\sum_{\substack{E/\F_{p^r}\\ \tr(E)\equiv m\pmod{M}}} \frac{x_E^{\nu}}{\omega_E}=\frac{1}{2^{\nu}p^{\frac{r\nu}{2}}} S_{\nu,m,M}(p^r)
\end{equation}
converge to the moments of $\mu_{\operatorname{ST}}$, where 
\begin{equation}\label{eqn:SmMkdef}
S_{\nu,m,M}(p^r):=\sum_{\substack{E/\F_{p^r}\\ \tr(E)\equiv m\pmod{M}}} \frac{\tr(E)^{\nu}}{\omega_E}.
\end{equation}
The weighted even moments $\nu\in 2\N_0$ have been studied by a number of authors such as \cite{Birch,Howe,BKP,KaplanPetrow2,KaplanPetrow,Lenstra,MP}. For example, in \cite[Theorem 1.2]{BKP} it was shown that as $p\to \infty$ (if $\nu=0$, we omit the subscript throughout, so, for example, we abbreviate $S_{0,m,M}$ as $S_{m,M}$)
\begin{equation}\label{eqn:evenEllipticpr}
	\frac{ S_{2k,m,M}(p)}{2^{2k}p^kS_{m,M}(p)}  = \frac{C_k}{2^{2k}}+O\left(p^{-\frac{1}{2}+\varepsilon}\right), \quad
	\frac{ S_{2k,m,M}(p^r)}{2^{2k}p^{rk} S_{m,M}(p^r)} = \frac{C_k}{2^{2k}}+O\left(p^{-1+\varepsilon}\right)\ (r\geq 2),
\end{equation}
where $C_k:=\frac{(2k)!}{k!(k+1)!}$ is the $k$-th \textit{Catalan number}. Since the main term in \eqref{eqn:evenEllipticpr} matches the $2k$-th moment of $\mu_{\operatorname{ST}}$ (see Lemma \ref{lem:SatoTateMoments}), it  remains to study the odd moments. In the case $M=1$, all of the odd moments vanish due to a symmetry which implies that for\footnote{Throughout we are making this assumption in order to avoid difficulties that arise from the fact that the short Weierstrass form is not general enough to include all elliptic curves in characteristic $2$ and $3$.} $p>3$ (see \cite[Theorem 3]{KaplanPetrow}) 
\begin{equation}\label{eqn:symmetry}
\sum_{\substack{E/\F_{p^r}\\ \tr(E)=-t}} \omega_E^{-1}=\sum_{\substack{E/\F_{p^r}\\ \tr(E)=t}} \omega_E^{-1}.
\end{equation}
For $M>1$, the symmetry giving \eqref{eqn:symmetry} is broken, but we prove in this paper that there is still a large amount of cancellation. We vary both the prime $p$ and the power $r$, getting equivalent cancellation in both cases. 
\begin{theorem}\label{thm:EllipticOdd}
Let $k\in\N$, $m\in\Z$, and $M\in\N$.
\begin{enumerate}[leftmargin=*]
\item[\rm (1)]
 For a fixed prime $p>3$, as $r\to\infty$ we have 
\[
S_{2k+1,m,M}(p^r)=O_{k,M,\varepsilon}\left(p^{(k+1+\varepsilon)r}\right).
\]
\item[\rm (2)] For $r\in\N$ fixed, as $p\to\infty$ we have 
\[
S_{2k+1,m,M}(p^r)=O_{k,M,\varepsilon}\left(p^{(k+1+\varepsilon)r}\right).
\]
\end{enumerate}
\end{theorem}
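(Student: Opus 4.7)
The plan is to convert the sum over elliptic curves into a sum of Hurwitz class numbers via Eichler--Deuring, and then recognize this sum as a Fourier coefficient of a (mock) modular form whose size is controlled by modular-form bounds. For the first step, the classical identity $\sum_{E/\F_{p^r},\,\tr(E)=t}\omega_E^{-1}=H(4p^r-t^2)$ for $p\nmid t$, together with smaller-order modifications in the supersingular case $p\mid t$, gives after restricting to $t\equiv m\pmod M$ that
\[
S_{2k+1,m,M}(p^r)=\sum_{\substack{|t|\le 2p^{r/2}\\ t\equiv m\pmod M}}t^{2k+1}\,H(4p^r-t^2)\ +\ O\bigl(p^{r(k+1)}\bigr),
\]
where the correction absorbs the bounded supersingular contribution.

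Second, recognize the main sum as a Fourier coefficient. Let $\mathcal{H}(\tau):=-\tfrac{1}{12}+\sum_{n\ge 1}H(n)q^n$ be Zagier's mock modular form of weight $\tfrac{3}{2}$ (whose shadow is a unary theta series), and introduce the odd twisted theta cusp form
\[
\Theta_{2k+1,m,M}(\tau):=\sum_{t\in m+M\Z}t^{2k+1}\,q^{t^2/(4M^2)},
\]
of half-integral weight $2k+\tfrac{3}{2}$ on a congruence subgroup of level $O(M^2)$. The product $\mathcal{H}\cdot\Theta_{2k+1,m,M}$ is then a mock modular form of integral weight $2k+3$ whose Fourier coefficients are, up to normalization, exactly the restricted sums $\sum_{t\equiv m\pmod M}t^{2k+1}H(4n-t^2)$. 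Applying holomorphic projection yields a genuine holomorphic modular form of weight $2k+3$ on $\Gamma_0(4M^2)$, plus an explicit error term arising from the pairing of the shadow of $\mathcal{H}$ with $\Theta_{2k+1,m,M}$.

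Finally, decompose this holomorphic form into cuspidal and Eisenstein components. Deligne's bound gives $|a_{p^r}(f)|\le\sigma_0(p^r)\,p^{r(k+1)}=O_\varepsilon(p^{r(k+1+\varepsilon)})$ for each weight-$(2k+3)$ newform $f$, and standard divisor-sum estimates produce the same order for the Eisenstein contribution. In case (2), the level $4M^2$ is independent of $p$, so the implied constants are uniform in $p$; in case (1), the identical exponent comes from the Hecke eigenvalue growth in $r$ on a fixed-dimensional space, with constants depending only on $k$ and $M$. The main obstacle will be controlling the holomorphic projection error: the unary theta shadow of $\mathcal{H}$ generates a non-holomorphic term whose $p^r$-th Fourier coefficient must also be shown to be $O(p^{r(k+1+\varepsilon)})$, which requires a delicate asymptotic analysis of an Eichler-type integral and its growth at the cusps. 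A helpful cancellation comes from the fact that $t^{2k+1}$ is odd in $t$: in the character decomposition of $\mathbbm{1}_{t\equiv m\pmod M}$, the trivial character contributes zero by the $t\mapsto -t$ symmetry, so only non-trivial characters modulo $M$ contribute to the sum, providing additional savings that help absorb the lower-order error terms.
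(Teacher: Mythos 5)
Your overall architecture (Eichler--Deuring to pass from $S_{2k+1,m,M}$ to Hurwitz class number sums, then viewing $\sum_{t\equiv m\,(M)}t^{2k+1}H(4n-t^2)$ as a coefficient of a weight $2k+3$ object and applying holomorphic projection plus Deligne) is the same as the paper's, but two steps as you state them would fail. First, for $k\geq 1$ the series $\Theta_{2k+1,m,M}$ is only \emph{quasimodular}, not modular, so $\mathcal{H}\cdot\Theta_{2k+1,m,M}$ does not transform with weight $2k+3$ and your holomorphic projection step has no modular target. The paper avoids this by forming the Rankin--Cohen bracket $[\mathcal{H},\theta_{1,m,M}]_k$ with the genuinely modular weight $\frac32$ cusp form $\theta_{1,m,M}=\sum_{t\equiv m\,(M)}t\,q^{t^2}$; the price is that the bracket coefficients mix the moments $H_{2k-2\ell+1,m,M}(n)$ for $0\leq\ell\leq k$, and one must unravel them by induction on $k$ (Lemma \ref{lem:HGtilde}). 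Second, your treatment of the Eisenstein part is wrong: a weight $2k+3$ Eisenstein series has $n$-th coefficient of size $n^{2k+2}$, which is far larger than the target $n^{k+1+\varepsilon}$, so ``standard divisor-sum estimates'' do not save you. The correct point is that there is \emph{no} Eisenstein contribution at all: since $\theta_{1,m,M}$ is a cusp form and both the bracket and holomorphic projection commute with slashing, $\pi_{\operatorname{hol}}([\widehat{\mathcal{H}},\theta_{1,m,M}]_k)$ vanishes at every cusp and lies in $S_{2k+3}(\Gamma_{4M^2,M})$, whence Deligne gives $O(n^{k+1+\varepsilon})$. This cuspidality is precisely the structural reason the odd moments are small, in contrast to the even case where $\theta_{0,m,M}$ is not cuspidal and the Eisenstein part produces the main term $C_kn^kH_{m,M}(n)$.

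You also leave the non-holomorphic projection term as an acknowledged ``main obstacle,'' but it cannot be deferred: it contributes at exactly the critical order $n^{k+1+\varepsilon}$ and must be computed. No delicate Eichler-integral asymptotics are needed; Mertens' explicit formula (Lemma \ref{lem:holprojRankinCohen}) reduces it to the finite sum $\sum^{*}_{t^2-s^2=n,\;t\equiv\pm m\,(M)}F_{k,t}(s)$ with $F_{k,t}(s)$ an explicit multiple of $(t-s)^{2k+2}$, and writing $d=t-s\mid n$ with $d^2\leq n$ gives $\ll n^{k+1}\sigma_0(n)\ll n^{k+1+\varepsilon}$. Finally, your closing remark about the additive character decomposition does not supply extra savings: the nontrivial characters leave you with twisted sums of the same difficulty as the original, so the cancellation must come from the modular argument above rather than from that decomposition. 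One must also separately bound the correction term $E_{2k+1,m,M}(p^r)$ in $2S_{2k+1,m,M}(p^r)=H_{2k+1,m,M}(p^r)+E_{2k+1,m,M}(p^r)$ (the supersingular and $\omega_E\neq 2$ contributions), which the paper does in Lemma \ref{lem:Eboundodd}; your $O(p^{r(k+1)})$ for this is plausible but asserted rather than proved.
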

\begin{remarks}
\noindent

\begin{enumerate}[leftmargin=*]
\item Using \eqref{eqn:Sratio}, Theorem \ref{thm:EllipticOdd} implies that  
\begin{equation}\label{eqn:SratioOdd}
\frac{1}{S_{m,M}(p^r)}\sum_{\substack{E/\F_{p^r}\\ \tr(E)\equiv m\pmod{M}}} \frac{x_E^{2k+1}}{\omega_E}=O_{k,M,\varepsilon}\left(p^{\left(-\frac{1}{2}+\varepsilon\right)r}\right).
\end{equation}
Here we use the fact that 
\begin{equation}\label{eqn:SmMlower}
S_{m,M}(p^r)\gg_{M,\varepsilon} p^{(1-\varepsilon)r},
\end{equation}
 which follows by plugging \cite[Lemma 3.7]{BKP} and \cite[Lemma 4.2]{BKP} into \cite[Lemma 2.2]{BKP}.

\item  By Birch's result \cite{Birch} and its generalizations \cite{BrockGranville,KatzSarnak}, for any constant $0<c<2$ the set 
\[
	\#\left\{E/\F_{p^r}: |\tr(E)|>c p^{\frac{r}{2}}\right\}
\]
has positive density in $\{E/\F_{p^r}\}$ as $p+r\to\infty$. Hence if one takes the moments in \eqref{eqn:SmMkdef} with $\tr(E)$ replaced with $|\tr(E)|$, then 
\[
	\quad \sum_{\substack{E/\F_{p^r}\\ \tr(E)\equiv m\pmod{M}}} \frac{|\tr(E)|^{\nu}}{\omega_E}\geq \sum_{\substack{E/\F_{p^r}\\ \tr(E)\equiv m\pmod{M}\\ \tr(E)>cp^{\frac{r}{2}}}} \frac{\tr(E)^{\nu}}{\omega_E}\gg_{c,\nu} p^{\frac{r\nu}{2}}S_{m,M}\left(p^r\right)\gg_{M,\varepsilon} p^{\left(\frac{\nu}{2}+1-\varepsilon\right)r}.
\]
In the last inequality, we use the bound  \eqref{eqn:SmMlower} again. Taking $\nu=2k+1$, we conclude that if there would be no cancellation in $S_{2k+1,m,M}(p^r)$, then we would have 
\[
S_{2k+1,m,M}(p^r)\gg p^{r\left(k+\frac{3}{2}-\varepsilon\right)}.
\]
We hence see that the cancellation yields a power savings of (at least) $p^{(\frac12-\varepsilon)r}$.
\end{enumerate}
\end{remarks}

By work of Deuring \cite{Deuring} and Schoof \cite{Schoof}, the number of isomorphism classes of elliptic curves $E/\F_{p^r}$ whose trace of Frobenius equals $t$ is closely related to class numbers of imaginary quadratic fields. Our investigation of the moments related to the trace of Frobenius goes through moments associated to these class numbers, or, equivalently, the \begin{it}Hurwitz class numbers\end{it} $H(n)$ for classes of binary quadratic forms, which we next define. For $n\in\N$, we let $H(n)$ denote the number of $\SL_2(\Z)$-inequivalent classes of integral binary quadratic forms $[a,b,c]$ of discriminant $-n$, counted with multiplicity $\frac{1}{2}$ if $[a,b,c]$ is $\SL_2(\Z)$-equivalent to $[a,0,a]$ and multiplicity $\frac{1}{3}$ if $[a,b,c]$ is $\SL_2(\Z)$-equivalent to $[a,a,a]$. We furthermore set $H(0):=-\frac{1}{12}$ and $H(n):=0$ for $n<0$. For $\nu\in\N_0$, $M\in\N$, $m\in\Z$, and $n\in\N_0$, we define 
\begin{equation}\label{eqn:HmMkdef}
H_{\nu,m,M}(n):=\sum_{t\equiv m \pmod{M}} H \left(4n-t^2\right) t^{\nu}.
\end{equation}
Analogously to the remark after Theorem \ref{thm:EllipticOdd}, if one replaces $t$ with $|t|$ in \eqref{eqn:HmMkdef}, then the main contribution is expected to come from $t$ of size roughly $|t|\approx \sqrt{n}$. Hence it is natural to divide \eqref{eqn:HmMkdef} by the expected average absolute value (abbreviating $H_{m,M}(n):=H_{0,m,M}(n)$)
\[
	n^{\frac{\nu}{2}} \sum_{t\equiv m\pmod{M}} H \left(4n-t^2\right) = n^{\frac{\nu}{2}} H_{m,M}(n).
\]
Indeed, in \cite[Theorem 1.1]{BKP}, it was shown that for $k\in\N_0$
\[
	\frac{H_{2k,m,M}(n)}{n^kH_{m,M}(n)} =C_k+O_{k,M,\varepsilon}\left(n^{-\frac{1}{2}+\varepsilon}\right).
\]
Combining this with \cite[Lemma 3.7]{BKP}, one also obtains
\begin{equation}\label{eqn:HEvenMoment}
H_{2k,m,M}(n)=C_k n^kH_{m,M}(n) + O_{k,M,\varepsilon}\left(n^{k+\frac{1}{2}+\varepsilon}\right).
\end{equation}
 Since $t^{\nu}$ alternates in sign for odd $\nu$, one again expects cancellation. We show that this happens in the following theorem.
\begin{theorem}\label{thm:OddMoments}
For $k\in\N_0$, we have
\[
\frac{H_{2k+1,m,M}(n)}{n^{k+\frac{1}{2}} H_{m,M}(n)} = O_{k,M,\varepsilon}\left(n^{-\frac{1}{2}+\varepsilon}\right), \quad
H_{2k+1,m,M}(n)=O_{k,M,\varepsilon}\left(n^{k+1+\varepsilon}\right).
\]
\end{theorem}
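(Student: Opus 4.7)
The plan is to adapt the holomorphic projection framework of \cite{BKP} (used there for the even moments) to the odd-moment setting, combining additive-character orthogonality with a half-integral weight theta series carrying the polynomial $t^{2k+1}$.

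First, I would use the orthogonality
\[
H_{2k+1,m,M}(n) = \frac{1}{M}\sum_{j=0}^{M-1}\zeta_M^{-jm}\,S_j(n), \qquad S_j(n):=\sum_{t\in\Z}H(4n-t^2)\,t^{2k+1}\,\zeta_M^{jt},
\]
where $\zeta_M:=e^{2\pi\imag/M}$. The substitution $t\mapsto -t$ yields $S_j(n) = -S_{M-j}(n)$, and in particular $S_0(n)=0$, which recovers the classical vanishing when $M=1$. It therefore suffices to bound each $S_j(n)$ individually by $O_{k,M,\varepsilon}(n^{k+1+\varepsilon})$.

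Second, I would interpret $S_j(n)$ as a Fourier coefficient of a non-holomorphic modular form. Let $\widehat{\mathcal{H}}$ be Zagier's harmonic Maass-form completion of the weight-$\tfrac32$ mock generating function $\mathcal{H}(\tau)=\sum_{n\ge 0}H(n)q^n$, and let
\[
\Theta_{j,2k+1}(\tau):=\sum_{t\in\Z}t^{2k+1}\zeta_M^{jt}\,q^{t^2/4}
\]
be the weight $2k+\tfrac32$ unary theta series with polynomial coefficient, which is modular on a suitable congruence subgroup of $\Gamma_0(4M^2)$ with a nebentypus. The product $\widehat{\mathcal{H}}\cdot\Theta_{j,2k+1}$, with appropriate rescaling in $\tau$, then transforms as a non-holomorphic modular form of integer weight $2k+3$, and the sums $S_j(n)$ arise as the $n$-th Fourier coefficients of its holomorphic $q$-expansion.

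Third, since $2k+3\ge 3$, the holomorphic projection operator $\pihol$ applies and produces a holomorphic modular form $f_j\in M_{2k+3}(\Gamma_0(4M^2),\psi_j)$ for some nebentypus $\psi_j$. By the standard formula for $\pihol$ of a product involving the non-holomorphic completion of $\mathcal{H}$, the $n$-th Fourier coefficient of $f_j$ equals $S_j(n)$ plus explicit ``correction terms'' (of divisor-sum type) coming from the shadow of $\widehat{\mathcal{H}}$. Decomposing $f_j=E_j+g_j$ into Eisenstein and cuspidal parts, I would verify that the Eisenstein coefficients of $E_j$ match these correction terms precisely, so that $S_j(n)$ equals, up to a controlled error, the $n$-th Fourier coefficient of the cusp form $g_j\in S_{2k+3}(\Gamma_0(4M^2),\psi_j)$. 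Deligne's bound then gives
\[
|a_n(g_j)|\ll_\varepsilon d(n)\,n^{(2k+2)/2}\ll_{k,M,\varepsilon} n^{k+1+\varepsilon},
\]
and summing over $j$ yields the second bound. Dividing by $n^{k+\frac12}H_{m,M}(n)$ and invoking the lower bound \eqref{eqn:SmMlower} gives the first (ratio) bound.

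The main obstacle is the Eisenstein-correction matching: \emph{a priori}, Eisenstein Fourier coefficients of weight $2k+3$ forms with character are divisor sums of size $n^{2k+2+\varepsilon}$, far larger than the target, so the stated bound only emerges after one verifies that the non-holomorphic-shadow corrections cancel against $E_j$ exactly. This is the odd-weight analogue of the main-term identification carried out for the even moments in \cite{BKP}, and it requires explicit computation both of the Eisenstein series in $M_{2k+3}(\Gamma_0(4M^2),\psi_j)$ and of the period integrals attached to the non-holomorphic part of $\widehat{\mathcal{H}}\,\Theta_{j,2k+1}$.
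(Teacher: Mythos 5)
There is a genuine gap, in two places. First, your theta factor $\Theta_{j,2k+1}(\tau)=\sum_t t^{2k+1}\zeta_M^{jt}q^{t^2/4}$ is \emph{not} a modular form for $k\geq 1$: a unary theta series $\sum_t P(t)q^{t^2}$ is modular only when the spherical polynomial $P$ has degree $0$ or $1$; for higher odd powers it is merely quasimodular (the paper records exactly this for $\theta_{\nu,m,M}$). Consequently the product $\widehat{\mathcal{H}}\cdot\Theta_{j,2k+1}$ does not transform with weight $2k+3$ and holomorphic projection cannot be applied to it as you describe. The correct device is the Rankin--Cohen bracket $[\widehat{\mathcal{H}},\theta_{1,m,M}]_k$ with the honest weight-$\frac32$ cusp form $\theta_{1,m,M}$, which is modular of weight $2k+3$; its coefficients are then a triangular combination of the moments $t^{2j+1}n^{k-j}$ rather than the pure moment $t^{2k+1}$, and the paper must invert this system by induction on $k$ (Lemma \ref{lem:HGtilde}). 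That inversion step is absent from your plan and cannot be avoided once you use brackets or completed quasimodular objects.

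Second, you have correctly identified the crux --- why the contribution that is not covered by Deligne's bound is only $O(n^{k+1+\varepsilon})$ rather than of divisor-sum size $n^{2k+2}$ --- but you defer it as an unverified ``Eisenstein-correction matching.'' In the paper's setup no such cancellation is needed: because $\theta_{1,m,M}$ is a cusp form and holomorphic projection commutes with slashing, $\pi_{\operatorname{hol}}([\widehat{\mathcal{H}},\theta_{1,m,M}]_k)$ is already a cusp form (Lemma \ref{lem:piholodd}), so there is no Eisenstein part to cancel; and the shadow correction $\pi_{\operatorname{hol}}([\mathcal{H}_0^-,\theta_{1,m,M}]_k)$ is evaluated explicitly (via Mertens' formula and the polynomial identity of Ono--Saad--Saikia) as a divisor sum restricted to $d\mid n$ with $d^2\leq n$ and $d+\frac nd\equiv\pm 2m\Pmod{2M}$ (Proposition \ref{prop:cmMkeval}). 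The restriction $d\leq\sqrt n$ makes each term at most $n^{k+1}$, so the whole correction is $O_{k,M,\varepsilon}(n^{k+1+\varepsilon})$ with no cancellation required. Since your proposal leaves precisely this step --- the entire analytic content of the theorem --- as a computation to be ``verified,'' it does not constitute a proof. (Minor: for the ratio bound you should invoke $H_{m,M}(n)\gg_\varepsilon n^{1-\varepsilon}$ from \cite[Lemma 3.7]{BKP}, not \eqref{eqn:SmMlower}, which concerns $S_{m,M}(p^r)$. Your character-orthogonality reduction and the observation $S_0(n)=0$ are fine and equivalent to the paper's direct restriction to the progression.)
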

\begin{remark}
Combining Theorem \ref{thm:OddMoments} with \eqref{eqn:HEvenMoment}, for $\nu\in\N_0$ we have 
\[
H_{\nu,m,M}(n)=\delta_{2\mid \nu} C_{\frac{\nu}{2}}  n^{\frac{\nu}{2}}H_{m,M}(n) + O_{\nu,M,\varepsilon}\left(n^{\frac{\nu+1}{2}+\varepsilon}\right),
\]
where $\delta_S:=1$ if a statement $S$ holds and $\delta_S:=0$ otherwise.
\end{remark}

The paper is organized as follows. In Section \ref{sec:prelim}, we give background information. In Section \ref{sec:OddMoments}, we evaluate the coefficients of the Rankin--Cohen brackets between the Hurwitz class number generating function and a weight $\frac{3}{2}$ unary theta function. The Rankin--Cohen brackets are then related to the odd  moments in Section \ref{sec:OddMomentsRankinCohen}, where Theorem \ref{thm:OddMoments} is proven. We show Theorem \ref{thm:EllipticOdd} in Section \ref{sec:elliptic}. Finally, we consider the applications to distributions of the trace of Frobenius in Section \ref{sec:SatoTate}, proving Theorem \ref{thm:SatoTateArithmetic} and Theorem \ref{thm:SatoTateArithmetic2}.

\section*{Acknowledgements}
The referees thank the anonymous referees for their detailed and helpful comments and suggestions. The first author has received funding from the European Research Council (ERC) under the European Union's Horizon 2020 research and innovation programme (grant agreement No. 101001179). The research of the second author was supported by a grant from the Research Grants Council of the Hong Kong SAR, China (project number HKU 17303618).  The research of third author was supported by ERCIM ‘Alain Bensoussan’ Fellowship Programme and the Science and Engineering Research Board (SRG/2023/000930).   The third author would like to thank The University of Hong Kong where he was a post-doctoral fellow,  where partially the  project was carried out, for providing excellent facilities and environment and the Institute of Mathematical Research at the University of Hong Kong for providing partial financial assistance to the post-doctoral fellowship the author was availing  during the course of this project.

\section{Preliminaries}\label{sec:prelim}
\subsection{Holomorphic and non-holomorphic modular forms}
We give a brief overview of the theory of modular forms here; for details, see e.g. \cite{Koblitz,OnoBook}. We assume throughout that $\Gamma\subseteq\Gamma_0(4)$ is a congruence subgroup containing $T:=\left(\begin{smallmatrix}1&1\\ 0 &1\end{smallmatrix}\right)$ and  $\kappa\in\frac{1}{2}\Z$. We let $\gamma=\left(\begin{smallmatrix}a&b\\c&d\end{smallmatrix}\right)\in\Gamma$ act on the complex \begin{it}upper half-plane\end{it} $\H:=\{\tau\in\C: \im(\tau)>0\}$ via \begin{it}fractional linear transformations\end{it} $\gamma \tau:=\frac{a\tau+b}{c\tau+d}$. For $d$ odd, set
\[
	\varepsilon_{d}:=\begin{cases} 1 &\text{if }d\equiv 1\pmod{4}\hspace{-1pt},\\ i&\text{if }d\equiv 3\pmod{4}\hspace{-1pt}.\end{cases}
\]
For $\gamma=\left(\begin{smallmatrix}a&b\\c&d\end{smallmatrix}\right)\in\Gamma$ 
and a function $F:\H\to \C$, we define the  weight $\kappa$ \begin{it}slash operator\end{it} by 
\[
F\big|_{\kappa}\gamma(\tau):= \left( \frac cd \right)^{2\kappa} \varepsilon_d^{2\kappa}(c\tau+d)^{-\kappa} F(\gamma\tau),
\]
where we denote the extended Legendre symbol by $(\frac{\cdot}{\cdot})$. 
Suppose that $\Gamma_1(N)\subseteq \Gamma$ for some $N\in\N$ and $\chi$ is a Dirichlet character $\hspace{-0.2cm}\Pmod N$. A function $F:\H\to\C$ satisfies \begin{it}modularity of weight $\kappa$ on $\Gamma$ \end{it}\textit{with character $\chi$} if for every $\gamma=\left(\begin{smallmatrix}a&b\\ c&d\end{smallmatrix}\right)\in\Gamma$ we have 
\[
F|_{\kappa}\gamma  = \chi(d) F.
\]
We call $F$ a \begin{it}(holomorphic) modular form\end{it} if $F$ is holomorphic on $\H$ and $F(\tau)$ grows at most polynomially in $v$ as $\tau=u+iv\to \Q\cup\{i\infty\}$. We say that $F:\H\to\C$ is an \begin{it}almost holomorphic modular form\end{it} if $F$ satisfies modularity of weight $\kappa$ on $\Gamma$ and there exist holomorphic functions $F_j$ ($0\leq j\leq \ell$) such that $F(\tau)=\sum_{j=0}^{\ell} F_j(\tau) v^{-j}$. We call $F_0$ a \begin{it}quasimodular form\end{it}.
\subsection{Rankin--Cohen brackets}
For $F_1,F_2$ transforming like modular forms of weight $\kappa_1,\kappa_2 \in \frac{1}{2}\Z$, respectively, define  for $k \in \N_0$ the $k$-th {\it Rankin-Cohen bracket} 
\begin{equation*}
[F_1,F_2]_k := \frac{1}{(2\pi i)^k}\sum_{j=0}^{k} (-1)^j \binom{\kappa_1 + k -1}{k-j} \binom{\kappa_2 + k -1}{j} F_1^{(j)} F_2^{(k-j)}
\end{equation*}
with  $\binom{\alpha}{j}:=\frac{\Gamma(\alpha+1)}{j!\Gamma(\alpha-j+1)}$. Here the \begin{it}gamma function\end{it} is defined by $\Gamma(s):=\int_{0}^{\infty} t^{s-1}e^{-t}dt$ for $\re(s)>0$ (and via meromorphic continuation for $s\in\C$) and we use the abbreviation $F^{(j)}(\tau):=\frac{\partial^j}{\partial \tau^j} F(\tau)$. Cohen showed in \cite[Theorem 7.1 (a)]{Cohen} that $[F_1,F_2]_k$ transforms like a modular form of weight $\kappa_1+\kappa_2+2k$.

\subsection{Holomorphic projection}
Suppose that we have a (not necessarily holomorphic) translation-invariant (i.e., invariant under $\tau\mapsto \tau+1$) function  $F(\tau) = \sum_{n \in \Z} c_{F,v}(n) q^{n}$, with $c_{F,v}(n)$ such that the series converges absolutely for all $\tau\in\H$. In \cite{Sturm}, Sturm observed that if $F$ satisfies modularity of weight $\kappa\geq 2$ on some congruence subgroup $\Gamma\subseteq\SL_2(\Z)$ and the {\it Petersson inner product}
\[
\left<F,g\right>:=\frac{1}{\left[\SL_2(\Z):\Gamma\right]} \int_{\Gamma\backslash\H} F(\tau)\overline{g(\tau)} v^{k} \frac{dudv}{v^2}
\]
 exists for every cusp form $g$ of weight $\kappa$ on $\Gamma$, then there exists a unique cusp form $f$ (in the same space) such that $\left<F,g\right>=\left<f,g\right>$ for every $g$. Sturm called this cusp form $f$ the {\it holomorphic projection} of $F$ and evaluated its Fourier coefficients as a certain integral. Gross and Zagier (see \cite[Proposition 5.1, p. 288]{GrossZagier}) instead used Sturm's evaluation via an integral as the definition of the holomorphic projection operator, which extends to a more general setting. Following the construction of \cite{GrossZagier}, we now relax the condition on $F$, dropping the assumption that it is modular and only assuming translation-invariance. If $c_{F,v}(n)\ll_{F,n} v^{2-\kappa}$ as $v\to 0^+$ for all $n\in\N$, then for $n \in \N$ we define 
\begin{align*}
c_{F}(n):= \frac{(4 \pi n)^{\kappa-1}}{\Gamma(\kappa-1)} \int_0^{\infty} c_{F,v} (n) v ^{\kappa-2} e^{-4 \pi n v} d v.
\end{align*}
If there exists a constant $c_F(0)\in\C$ for which $F(\tau)-c_F(0)$ decays as $\tau\to i\infty$, and if a similar condition holds as $\tau\to \varrho$ for any $\varrho\in\Q$, we define (see \cite{MOR} for the statement written in this generality) the \begin{it}holomorphic projection\end{it} of $F$ to be 
\begin{equation*}
	\pi_{\text{hol}} (F) (\tau) := c_F(\kappa,0) + \sum_{n =1}^\infty c_{F}(n) q^{n}.
\end{equation*}
Although we may use holomorphic projection with any $\kappa$ satisfying the growth conditions, one usually applies the holomorphic projection operator to a function $F$ satisfying modularity of weight $\kappa$, but then splits $F$ into a sum $F=\sum_{j=1}^{\ell} F_j$ for certain appropriate functions $F_1,\dots,F_{\ell}$, so the weight $\kappa$ is clear from the context. Using this more general definition above allows one to use the operator for the individual summands $F_j$ even though they may not be modular.

Mertens \cite{Me} studied the holomorphic projection of the Rankin--Cohen brackets between holomorphic modular forms and non-holomorphic modular forms with Fourier expansions of the type 
\begin{equation}\label{eqn:harmonicFourier}
F(\tau)=F^+(\tau)+F^{-}(\tau)
\end{equation}
with (for $q:=e^{2\pi i \tau }$) 
\[
F^+(\tau)=\sum_{n\gg -\infty} c_F^+(n) q^n,\qquad
F^-(\tau)=c_F^-(0) v^{1-\kappa} + \sum_{n\le-1} c_F^-(n) \Gamma\left(1-\kappa,4\pi |n| v\right) q^{n}.
\]
Here for $x\in\R_{>0}$ and $\alpha\in\C$, denote by $\Gamma(\alpha,x)=:\int_x^{\infty} t^{\alpha-1}e^{-t}dt$ the {\it incomplete gamma function}.
Set 
\[
F_0^-(\tau):=F^-(\tau)-c_F^-(0) v^{1-\kappa}. 
\]
We require the contribution to this holomorphic projection coming from the non-holomorphic part, which was given in \cite[Lemma 4.4 and Theorem 4.6]{Me}. To state it, for $a\in\N$ and $b\in\R$ we define the two-variable polynomials 
\[
P_{a,b}(X,Y):=\sum_{j=0}^{a-2}\binom{j+b-2}{j} X^j(X+Y)^{a-j-2}. 
\]
Moreover, for $\kappa_1,\kappa_2\in\R\setminus\Z$ and $k\in\N_0$ with $2k-2\geq \kappa_1+\kappa_2\in\Z$, we define 
\[
\alpha_{\kappa_1,\kappa_2,k}:= \frac{1}{(\kappa_1+\kappa_2+2k-2)!(\kappa_1-1)}  \sum_{\mu=0}^k \frac{\Gamma(2-\kappa_1)\Gamma(\kappa_2+2k-\mu)}{\Gamma(2-\kappa_1-\mu)}\binom{\kappa_1+k-1}{k-\mu} \binom{\kappa_2+k-1}{\mu}.
\]

\begin{lemma}\label{lem:holprojRankinCohen} 
Let $\kappa_1,\kappa_2\in \N_0+\frac{1}{2}$ and $k\in\N_0$ with $2k-2\geq \kappa_1+\kappa_2\in\Z$. Suppose that $F$ satisfies modularity of weight $\kappa_1$ and has a Fourier expansion of the type \eqref{eqn:harmonicFourier} that grows at most polynomially towards the cusps and that $g$ is a holomorphic modular form of weight $\kappa_2$.
Applying holomorphic projection in weight $\kappa_1+\kappa_2+2k$ to $\left[F,g\right]_k$ and splitting $F$ as in \eqref{eqn:harmonicFourier}, we have the following.
\begin{enumerate}[leftmargin=*,label={\rm(\arabic*)}]
\item We have 
\[
\frac{(4\pi)^{1-\kappa_1}}{\kappa_1-1} \pi_{\operatorname{hol}}\left(\left[v^{1-\kappa_1},g\right]_{k}\right) =\alpha_{\kappa_1,\kappa_2,k} \sum_{n=0}^{\infty} n^{\kappa_1+k-1} c_g(n) q^n.
\]
\item We have  
\[
\pi_{\operatorname{hol}}\left(\left[F_0^{-},g\right]_{k}\right)(\tau) = \sum_{n=1}^{\infty}b(n) q^n,
\]
where 
\begin{multline*}
\hspace{.35cm}b(n)=-\Gamma(1-\kappa_1)\sum_{\substack{j,\ell\in\N\\  j-\ell=n}}\sum_{\mu=0}^{k} \binom{\kappa_1+k-1}{k-\mu}\binom{\kappa_2+k-1}{\mu} j^{k-\mu} c_g(j) \frac{c_{F}^-(-\ell)}{\ell^{\kappa_1-1}}\\
\times\left(j^{\mu-2k-\kappa_2+1}P_{\kappa_1+\kappa_2+2k,2-\kappa_1-\mu}(n,\ell)-\ell^{\kappa_1+\mu-1}\right).
\end{multline*}

\end{enumerate}
\end{lemma}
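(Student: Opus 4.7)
The statement is essentially a reformulation of Mertens' \cite[Lemma 4.4 and Theorem 4.6]{Me} in the notation of the present paper, so one route is simply to quote those results. I would prefer instead to reproduce the computation, since the derivations are self-contained modulo the definition of holomorphic projection and standard identities for the incomplete gamma function. In both parts the general scheme is: expand the Rankin--Cohen bracket explicitly, read off its Fourier expansion (with $v$-dependent coefficients), and apply Sturm's integral $c_{F}(n)=\frac{(4\pi n)^{\kappa-1}}{\Gamma(\kappa-1)}\int_0^\infty c_{F,v}(n)v^{\kappa-2}e^{-4\pi nv}\,dv$ with $\kappa=\kappa_1+\kappa_2+2k$ term by term.

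\textbf{Part (1).} Using $\partial v/\partial \tau=\tfrac{1}{2i}$ iteratively gives
\[
\frac{d^{\mu}}{d\tau^{\mu}}\bigl(v^{1-\kappa_1}\bigr)=\frac{\Gamma(2-\kappa_1)}{\Gamma(2-\kappa_1-\mu)(2i)^{\mu}}\,v^{1-\kappa_1-\mu},
\]
while $g^{(k-\mu)}(\tau)=\sum_{n\geq 0}(2\pi in)^{k-\mu}c_g(n)q^n$. Substituting both into the Rankin--Cohen definition with $j=\mu$ writes $[v^{1-\kappa_1},g]_k$ as $\sum_{n\geq 0}q^n\sum_{\mu=0}^{k}B_\mu(n)v^{1-\kappa_1-\mu}$ for explicit constants $B_\mu(n)$. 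Evaluating
\[
\int_0^{\infty}v^{\kappa_2+2k-\mu-1}e^{-4\pi nv}\,dv=\frac{\Gamma(\kappa_2+2k-\mu)}{(4\pi n)^{\kappa_2+2k-\mu}}
\]
and collecting the powers of $n$ and of $4\pi$ that result from multiplying by $(4\pi n)^{\kappa_1+\kappa_2+2k-1}/\Gamma(\kappa_1+\kappa_2+2k-1)$ reproduces exactly the combinatorial factor $\alpha_{\kappa_1,\kappa_2,k}$, with the remaining factor $(4\pi)^{\kappa_1-1}(\kappa_1-1)n^{\kappa_1+k-1}c_g(n)$ matching the claim after dividing by $\frac{\kappa_1-1}{(4\pi)^{1-\kappa_1}}$.

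\textbf{Part (2).} The essential new input is the derivative identity
\[
\frac{d}{d\tau}\!\left(\Gamma(1-\kappa_1,4\pi\ell v)q^{-\ell}\right)=2\pi i\ell\Bigl[(4\pi\ell v)^{-\kappa_1}e^{-4\pi\ell v}-\Gamma(1-\kappa_1,4\pi\ell v)\Bigr]q^{-\ell},
\]
which is a direct consequence of $\frac{d}{dx}\Gamma(s,x)=-x^{s-1}e^{-x}$. Iterating $\mu$ times splits the outcome into one term still proportional to $\Gamma(1-\kappa_1,4\pi\ell v)q^{-\ell}$ together with a tail of terms $v^{-\kappa_1-a}e^{-4\pi\ell v}q^{-\ell}$ for $0\leq a\leq \mu-1$. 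Combining with $g^{(k-\mu)}$ produces Fourier modes $q^{j-\ell}$ with $j-\ell=n$; the exponential $e^{-4\pi\ell v}$ in the tail conveniently merges with $e^{-4\pi nv}$ into $e^{-4\pi jv}$, so Sturm's integral against $v^{\kappa_1+\kappa_2+2k-2-a}$ reduces to a gamma function evaluation whose sum over $a$ assembles into the polynomial $P_{\kappa_1+\kappa_2+2k,\,2-\kappa_1-\mu}(n,\ell)$. The residual incomplete-gamma piece integrates via the elementary identity $\int_0^{\infty}\Gamma(1-\kappa_1,4\pi\ell v)e^{-4\pi jv}v^{s-1}\,dv$, which after a standard shift in the variable of integration produces the subtracted contribution $\ell^{\kappa_1+\mu-1}$, together with the overall prefactor $-\Gamma(1-\kappa_1)$.

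\textbf{Main obstacle.} The hard part is the combinatorial bookkeeping in part (2): one must check that the binomial coefficients that emerge from iterated differentiation of the incomplete gamma function, once weighted by the Rankin--Cohen binomials and the gamma-factors from Sturm's integral, rearrange exactly into the shifted-binomial sum defining $P_{a,b}$. The cleanest path is induction on $\mu$, with the inductive step being a Pochhammer identity, after which the separation into the two displayed summands of $b(n)$ follows by collecting the $\ell^{\kappa_1+\mu-1}$ contribution from the surviving incomplete-gamma piece and the $P_{a,b}$-piece from the exponentials.
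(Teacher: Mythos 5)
Your proposal is correct in substance but takes a genuinely different route from the paper, for the simple reason that the paper contains no proof of this lemma at all: it is stated as a recollection of \cite[Lemma 4.4 and Theorem 4.6]{Me}, translated into the present notation (this is announced in the sentence immediately preceding the statement). You note that quoting Mertens is one option and then elect to rederive the result. Your part (1) is essentially a complete and correct proof: the iterated Wirtinger derivative of $v^{1-\kappa_1}$, the term-by-term application of Sturm's integral, and the bookkeeping of the powers of $n$, of $4\pi$, and of $2\pi i$ all check out and reassemble into $\alpha_{\kappa_1,\kappa_2,k}$ exactly as you describe. For part (2) you correctly isolate the two essential inputs --- the derivative identity $\frac{d}{d\tau}\bigl(\Gamma(1-\kappa_1,4\pi\ell v)q^{-\ell}\bigr)=2\pi i\ell\bigl[(4\pi\ell v)^{-\kappa_1}e^{-4\pi\ell v}-\Gamma(1-\kappa_1,4\pi\ell v)\bigr]q^{-\ell}$ and the merging of $e^{-4\pi\ell v}$ with $e^{-4\pi nv}$ into $e^{-4\pi jv}$ --- but the identification of the resulting double sum with $P_{\kappa_1+\kappa_2+2k,\,2-\kappa_1-\mu}(n,\ell)$, the extraction of the $\ell^{\kappa_1+\mu-1}$ term, and the overall prefactor $-\Gamma(1-\kappa_1)$ are asserted rather than carried out; in particular, the surviving incomplete-gamma piece requires the nontrivial evaluation of $\int_0^{\infty}\Gamma(1-\kappa_1,4\pi\ell v)\,v^{\kappa_1+\kappa_2+2k-2}e^{-4\pi nv}\,dv$, which itself contributes to both displayed summands of $b(n)$, so your attribution of $P_{a,b}$ entirely to the ``tail'' terms and of $\ell^{\kappa_1+\mu-1}$ entirely to the residual piece still needs to be checked. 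Since that bookkeeping is precisely the content of Mertens' proof, your part (2) in its current form effectively reduces to the citation you set aside. What your approach buys is self-containedness and an independent check of the normalizing constants (genuinely valuable here, since $\alpha_{\kappa_1,\kappa_2,k}$ and the sign of $\Gamma(1-\kappa_1)$ are easy to corrupt in translation between notations); what the paper's approach buys is brevity.
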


\subsection{Examples of modular objects}\label{sec:prelimexamples}
We require a few specific modular objects. We first define 
\[
\theta_{\nu,m,M}(\tau):=\sum_{n\equiv m\pmod{M}} n^{\nu} q^{n^2},
\]
Setting $\Gamma_{N,M}:=\Gamma_0(N)\cap\Gamma_1(M)$ for $M\mid N$, this is a quasimodular form of weight  $\nu+\frac{1}{2}$ on $\Gamma_{4M^2,M}$ (see \cite[Proposition 2.1]{Shimura}). It is moreover a holomorphic modular form for $\nu=0$ and a cusp form for $\nu=1$. Let 
\[
\mathcal{H}(\tau):=\sum_{n\in\Z} H(n) q^n
\]
be the generating function for the Hurwitz class numbers. Its modular properties follow by \cite[Chapter 2, Theorem 2]{HZ}. 
\begin{theorem}\label{thm:Hcomplete}
The function 
\[
\widehat{\mathcal{H}}(\tau):=\mathcal{H}(\tau)+\frac{1}{8\pi \sqrt{v}}+ \frac{1}{4\sqrt{\pi}}\sum_{n=1}^\infty n\Gamma\left(-\frac12, 4\pi n^2 v\right)q^{-n^2}
\]
is a non-holomorphic modular form of weight $\frac{3}{2}$ on $\Gamma_0(4)$ which grows at most polynomially towards all cusps and has an expansion of the type \eqref{eqn:harmonicFourier}.
\end{theorem}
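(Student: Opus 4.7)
The plan is to directly invoke the classical result of Hirzebruch and Zagier (cited as \cite{HZ}, Theorem 2), which is precisely that the non-holomorphic completion of the Hurwitz class number generating series by the claimed correction term transforms like a modular form of weight $\tfrac32$ on $\Gamma_0(4)$. The content of the theorem to be proved is therefore: (i) matching our $\widehat{\mathcal{H}}$ against the completion given in \cite{HZ}, (ii) checking polynomial growth at all cusps, and (iii) reading off that the Fourier expansion fits the template \eqref{eqn:harmonicFourier} with $\kappa = \tfrac32$.

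For (i), I would compare coefficient by coefficient. The Zagier-type completion is characterized by the property that its image under the lowering operator is (up to an explicit constant) the unary theta function $\theta(\tau) := \sum_{n\in\Z} q^{n^2}$, and a routine calculation with $\xi_{3/2}$ (or the raising/lowering pair) identifies the correct normalizing constants $\frac{1}{8\pi}$ (for the $v^{-1/2}$ term, coming from $n=0$) and $\frac{1}{4\sqrt{\pi}}$ (for the $q^{-n^2}$ terms with $n\geq 1$). These match the coefficients appearing in the statement, so modularity on $\Gamma_0(4)$ is inherited from \cite{HZ}.

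For (iii), with $\kappa=\tfrac32$, I set $F^+ := \mathcal{H}$ (supported on $n \geq 0$, so $n \gg -\infty$), and identify in $F^-$ the constants $c_F^-(0) = \frac{1}{8\pi}$ (so that $c_F^-(0) v^{1-\kappa} = \frac{1}{8\pi}v^{-1/2}$) together with $c_F^-(-n^2) = \frac{n}{4\sqrt{\pi}}$ for $n\geq 1$ and $c_F^-(m) = 0$ otherwise. This is exactly the form prescribed by \eqref{eqn:harmonicFourier}.

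For (ii), at the cusp $i\infty$ the holomorphic part $\mathcal{H}$ is bounded (in fact $H(n) = O_\varepsilon(n^{1/2+\varepsilon})$), the term $\frac{1}{8\pi\sqrt{v}}$ decays, and each term $n\,\Gamma(-\tfrac12, 4\pi n^2 v) q^{-n^2}$ decays exponentially because the asymptotic $\Gamma(-\tfrac12, x) \sim x^{-3/2}e^{-x}$ combines with $|q^{-n^2}| = e^{2\pi n^2 v}$ to give $e^{-2\pi n^2 v}$ decay. For the other cusps of $\Gamma_0(4)$ (namely $0$ and $\tfrac12$), I would conjugate $\widehat{\mathcal{H}}$ by a scaling matrix sending $i\infty$ to the cusp and invoke weight $\tfrac32$ modularity from (i); the resulting expansion again has a finite principal part in $q$ plus exponentially decaying non-holomorphic terms, hence is of at most polynomial growth.

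The only genuine obstacle is bookkeeping: matching the normalizations in \cite{HZ} (their conventions for $\Gamma(-\tfrac12, \cdot)$ versus the complementary error function and their choice of the $v^{-1/2}$ constant) against the conventions fixed in the present paper. Once the constants are aligned, the proof is essentially a translation of the Hirzebruch--Zagier theorem into the harmonic Maass form language of \eqref{eqn:harmonicFourier}.
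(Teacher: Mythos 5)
Your proposal is correct and follows the same route as the paper, which likewise establishes this theorem by direct appeal to \cite[Theorem 2]{HZ} and treats the identification of the completion term, the expansion of type \eqref{eqn:harmonicFourier}, and the growth at the cusps as routine consequences of that citation. The additional detail you supply (matching the constants $\frac{1}{8\pi}$ and $\frac{1}{4\sqrt{\pi}}$, the exponential decay of $\Gamma(-\frac12,4\pi n^2 v)q^{-n^2}$) is exactly the bookkeeping the paper leaves implicit.
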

\subsection{Equidistribution and the method of moments}\label{sec:momentmethod}

\begin{definition}
Let $f$ be a real-valued continuous function with compact support. We define a measure $\mu_f$ on Lebesgue measurable sets $X$ by 
\[
\mu_f(X):=\int_{X} f(x)dx.
\]
In the special case $f=f_{\operatorname{ST}}$, we write $\mu_{\operatorname{ST}}=\mu_{f_{\operatorname{ST}}}$ for ease of notation. We say that a sequence of subsets $\mathcal{S}_j\subseteq \R$ is \begin{it}equidistributed with respect to $\mu_f$\end{it} if for every $a\leq b$ we have 
\[
\lim_{j\to\infty} \Pr\left(a\leq x\leq b: x\in \mathcal{S}_j\right) = \int_{a}^b f(x)dx=\mu_f([a,b]).
\]
We investigate $\mu_{f}$ through the
 \begin{it}$\nu$-th moment of $f$ at $0$\end{it}, 
which is defined by 
\[
\mu_{f,\nu}:=\int_{-\infty}^\infty x^{\nu} f(x) dx.
\]
\end{definition}
In the special case $f=f_{\operatorname{ST}}$, we write $\mu_{\operatorname{ST},\nu}$. Suppose now that $f$ is a probability density function and let $X$ be a random variable corresponding to $f$; i.e., $X$ randomly assigns a value in $\R$ such that for $a<b$ 
\[
\operatorname{Pr}\left(a\leq X\leq b\right)=\int_{a}^{b} f(x)dx.
\]
Recall that for a function $g:\R\to\R$, the \begin{it}expected value of $g(X)$\end{it} is 
\[
E\left(g(X)\right):=\int_{-\infty}^{\infty} g(x)f(x)dx.
\]
In this case, we see that the $\nu$-th moment is
\[
E\left(X^{\nu}\right)=\mu_{f,\nu}.
\]
Under certain mild conditions, the moments of a distribution uniquely determine the distribution (see~\cite[Theorem 30.1 and Theorem 30.2]{Bil}). In order to prove Theorem \ref{thm:SatoTateArithmetic}, we require the moments of the Sato--Tate measure, which are well-known (for example, see the introduction of \cite{Birch}). 
\begin{lemma}\label{lem:SatoTateMoments}
For $k\in\N_0$, the $2k$-th moment of the Sato--Tate distribution is
\[
\mu_{\operatorname{ST},2k}= \frac{C_k}{2^{2k}},
\]
while $\mu_{\operatorname{ST},2k+1}=0$.
\end{lemma}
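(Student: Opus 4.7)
The plan is a direct calculation of the integral
\[
\mu_{\operatorname{ST},\nu}=\int_{-1}^{1} x^{\nu}\,\frac{2}{\pi}\sqrt{1-x^{2}}\,dx,
\]
since $f_{\operatorname{ST}}$ is supported on $[-1,1]$ and the definition of $\mu_{f,\nu}$ reduces to this one-dimensional integral.

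First I would dispose of the odd case. For $\nu=2k+1$, the integrand $x^{2k+1}\sqrt{1-x^{2}}$ is an odd function on the symmetric interval $[-1,1]$, so $\mu_{\operatorname{ST},2k+1}=0$.

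For the even case $\nu=2k$, I would substitute $x=\cos\theta$ (so that $\sqrt{1-x^{2}}=\sin\theta$ and $dx=-\sin\theta\,d\theta$) to rewrite
\[
\mu_{\operatorname{ST},2k}=\frac{2}{\pi}\int_{0}^{\pi}\cos^{2k}(\theta)\sin^{2}(\theta)\,d\theta=\frac{4}{\pi}\int_{0}^{\pi/2}\cos^{2k}(\theta)\sin^{2}(\theta)\,d\theta.
\]
This is a standard beta-integral; explicitly $\int_{0}^{\pi/2}\cos^{2k}(\theta)\sin^{2}(\theta)\,d\theta=\tfrac{1}{2}B(k+\tfrac{1}{2},\tfrac{3}{2})=\tfrac{\Gamma(k+\tfrac{1}{2})\Gamma(\tfrac{3}{2})}{2\,\Gamma(k+2)}$. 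Plugging in the classical evaluations $\Gamma(k+\tfrac{1}{2})=\frac{(2k)!}{4^{k}k!}\sqrt{\pi}$ and $\Gamma(\tfrac{3}{2})=\tfrac{\sqrt{\pi}}{2}$, and simplifying, yields
\[
\mu_{\operatorname{ST},2k}=\frac{4}{\pi}\cdot\frac{(2k)!\,\pi}{4^{k+1}\,k!\,(k+1)!}=\frac{(2k)!}{4^{k}\,k!\,(k+1)!}=\frac{C_{k}}{2^{2k}},
\]
where in the last equality I use the definition $C_{k}=\tfrac{(2k)!}{k!(k+1)!}$ of the Catalan numbers.

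There is no real obstacle here; the only point worth checking is the bookkeeping in the Gamma-function evaluation, which is why I prefer the substitution $x=\cos\theta$ together with the Wallis/beta-function formula rather than iterated integration by parts. Alternatively one could verify the recursion $\mu_{\operatorname{ST},2k}=\tfrac{2k-1}{2(k+1)}\mu_{\operatorname{ST},2k-2}$ (from integration by parts on $\int x^{2k}\sqrt{1-x^{2}}\,dx$) and check that $\tfrac{C_{k}}{2^{2k}}$ satisfies the same recursion with the correct base case $\mu_{\operatorname{ST},0}=1=\tfrac{C_{0}}{2^{0}}$, giving a self-contained induction avoiding the beta function entirely.
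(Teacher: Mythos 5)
Your proof is correct: the odd moments vanish by symmetry, and your beta-function evaluation of the even moments checks out, as does the alternative recursion $\mu_{\operatorname{ST},2k}=\tfrac{2k-1}{2(k+1)}\mu_{\operatorname{ST},2k-2}$ (one can confirm the ratio $\tfrac{C_k/4^k}{C_{k-1}/4^{k-1}}=\tfrac{2(2k-1)}{4(k+1)}$ matches). The paper itself gives no proof of this lemma, simply citing the moments as well-known from the introduction of Birch's paper, so your computation supplies the standard argument that the reference is implicitly appealing to; there is nothing to reconcile between the two.
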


\subsection{Weighted distributions and equidistribution}\label{sec:weightedist}

By determining when two of the elliptic curves $E_{\alpha,\beta}$ are isomorphic (see \cite[discussion after (1)]{Birch}), Birch's result and its generalizations may be naturally described using a weighted probability on $\mathcal{E}(p^r)$. For a statement $\mathcal{S}(E)$ which is true or false for each $E$ in a subset $\mathcal{E}$ of elliptic curves over $\F_{p^r}$, set $\mathcal{E}_{\mathcal{S}}:=\{E\in \mathcal{E}: \mathcal{S}(E)\text{ holds}\}$ and
\begin{equation*}
\operatorname{Pr}_{\Aut}\left(\mathcal{S}(E): E\in \mathcal{E}\right):=\frac{{\displaystyle\sum_{E\in\mathcal{E}_{\mathcal{S}}}} \omega_E^{-1} }{{\displaystyle{\sum_{E\in\mathcal{E}}}}\omega_E^{-1}}.
\end{equation*}
Using Birch's counting argument in \cite[discussion after (1)]{Birch} for the number of $E_{\alpha,\beta}$ in each isomorphism class, Birch's result may be written as 
\[
\lim_{p\to\infty}\operatorname{Pr}_{\Aut}\left(a\leq x_E\leq b: E\in \mathcal{E}(p)\right) = \mu_{\ST}([a,b]).
\]
It is well-known that the discrepancy between the weighted and unweighted probabilities vanishes as $p\to\infty$. The corresponding relationship for the weighted and unweighted moments of traces in arithmetic progressions is given in the following lemma.

\begin{lemma}\label{lem:weightedunweighted}
	Let $m\in\Z$ and $M,r\in\N$. Then as $p\to\infty$ we have 
	\begin{equation*}
		S_{\nu,m,M}(p^r)=  2^{\nu-1} p^{\frac{r\nu}{2}}\sum_{E\in\mathcal{E}_{m,M}(p^r)} x_E^{\nu} + O_{\nu}\left(p^{\frac{r\nu}{2}} \right).
	\end{equation*}
\end{lemma}

\begin{proof}
First note that for $\nu\in\N_0$ we have 
\begin{equation}\label{first}
S_{\nu,m,M}(p^r) = 2^{\nu-1}p^{\frac{r\nu}{2}} \sum_{E\in\mathcal{E}_{m,M}(p^r)} x_E^{\nu} + 2^{\nu}p^{\frac{r\nu}{2}} \sum_{\substack{E\in \mathcal{E}_{m,M}(p^r)\\ \omega_{E}\neq 2}} \left(\frac{1}{\omega_E}-\frac{1}{2}\right) x_{E}^{\nu}.
\end{equation}
It is well-known that the number of $E/\F_{p^r}$ with $\omega_E\neq 2$ is bounded by an absolute constant (see \cite[Chapter III and Chapter X]{Silverman} for further details). Hence, as $p\to\infty$, by \eqref{first}
\[
\left|\sum_{\substack{E\in \mathcal{E}_{m,M}(p^r)\\ \omega_{E}\neq 2}} \left(\frac{1}{\omega_E}-\frac{1}{2}\right) x_{E}^{\nu}\right|\leq \frac{1}{2}\#\{E/\F_{p^r}: \omega_E\neq 2\}\ll 1,
\]
yielding the lemma. 
\end{proof}

\section{Evaluation of the Rankin--Cohen brackets}\label{sec:OddMoments}

 As in \cite[Lemma 2.6 (3)]{BKP} (see also \cite[(4.6)]{Me}, where the Fourier expansion differs slightly), we have 
\begin{equation}\label{eqn:holprojrelate}
\left[\mathcal{H},\theta_{1,m,M}\right]_{k}=\pi_{\operatorname{hol}}\left(\left[\widehat{\mathcal{H}},\theta_{1,m,M}\right]_{k}\right)-\pi_{\operatorname{hol}}\left(\left[  \mathcal{H}^{-},\theta_{1,m,M}\right]_{k}\right).
\end{equation}
Writing 
\begin{equation}\label{eqn:HThetaFourier}
\left[\mathcal{H},\theta_{1,m,M}\right]_{k}(\tau)=\sum_{n=0}^\infty c_{k,m,M}(n)q^n,
\end{equation}
the idea is to obtain asymptotic behavior $c_{k,m,M}(n)$ by bounding the coefficients of the individual terms on the right-hand side of \eqref{eqn:holprojrelate}. We begin by considering the first term on the right-hand side of \eqref{eqn:holprojrelate}. We recall its modular properties, which leads to a bound by Deligne \cite{Deligne}.


\begin{lemma}\label{lem:piholodd}
For $k\in\N_0$ we have 
\[
\pi_{\operatorname{hol}}\left(\left[\widehat{\mathcal{H}},\theta_{1,m,M}\right]_{k}\right)\in S_{2k+3}\left(\Gamma_{4M^2,M}\right).
\]
Hence the $n$-th coefficient of $\pi_{\operatorname{hol}}([\widehat{\mathcal{H}},\theta_{1,m,M}]_{k})$ is $O(n^{k+1+\varepsilon})$.
\end{lemma}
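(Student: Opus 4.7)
The plan has three steps: establish that $[\widehat{\mathcal{H}},\theta_{1,m,M}]_{k}$ is a non-holomorphic modular form of weight $2k+3$ on $\Gamma_{4M^2,M}$ with cuspidal decay at every cusp; apply holomorphic projection to land in $S_{2k+3}(\Gamma_{4M^2,M})$; then invoke Deligne's bound.

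First, Theorem \ref{thm:Hcomplete} gives weight-$\frac{3}{2}$ modularity of $\widehat{\mathcal{H}}$ on $\Gamma_0(4)$ with at most polynomial growth at every cusp, and Section \ref{sec:prelimexamples} gives that $\theta_{1,m,M}$ is a cusp form of weight $\frac{3}{2}$ on $\Gamma_{4M^2,M}\subseteq\Gamma_0(4)$. Cohen's theorem (Section \ref{sec:prelim}) then implies that $[\widehat{\mathcal{H}},\theta_{1,m,M}]_{k}$ transforms with weight $\frac{3}{2}+\frac{3}{2}+2k=2k+3$ on $\Gamma_{4M^2,M}$, with the two theta-type multiplier systems combining to an honest integer-weight Dirichlet character on this subgroup.

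Second, I would check exponential decay at every cusp. Pulling back by a scaling matrix at any cusp, the factor $\theta_{1,m,M}$, together with each of its $\tau$-derivatives, decays exponentially because it is cuspidal, whereas $\widehat{\mathcal{H}}$ and each of its $\tau$-derivatives have only polynomial growth: the non-holomorphic pieces $\frac{1}{8\pi\sqrt{v}}$ and $\frac{1}{4\sqrt{\pi}}\sum_{n\geq 1}n\,\Gamma(-\tfrac{1}{2},4\pi n^{2}v)q^{-n^{2}}$ in $\widehat{\mathcal{H}}$ differentiate to expressions of comparable polynomial growth. Hence every summand of the Rankin--Cohen bracket is bounded by a cuspidal factor times a polynomial in $v$ at each cusp and so decays exponentially there. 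The pairing against any $g\in S_{2k+3}(\Gamma_{4M^2,M})$ is thus absolutely convergent, so $\pi_{\operatorname{hol}}$ is well-defined in weight $2k+3$ and its output is a holomorphic modular form whose constant Fourier coefficient at every cusp vanishes, placing it in $S_{2k+3}(\Gamma_{4M^2,M})$.

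Third, since $2k+3$ is a positive integer, Deligne's bound applies. Decomposing the projected form in a Hecke-eigenform basis of $S_{2k+3}(\Gamma_{4M^2,M})$ and applying the Ramanujan--Petersson estimate $|a_{f}(n)|\ll d(n)n^{(w-1)/2}\ll n^{(w-1)/2+\varepsilon}$ for newforms of weight $w$, with the standard extension to oldforms, yields Fourier coefficients of size $O_{k,M,\varepsilon}\!\left(n^{(2k+2)/2+\varepsilon}\right)=O_{k,M,\varepsilon}\!\left(n^{k+1+\varepsilon}\right)$, as required.

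The main obstacle I anticipate is the bookkeeping in step one and two: verifying that the product of the two half-integral multiplier systems is indeed a Dirichlet character of $\Gamma_{4M^2,M}$, and tracking the slashed non-holomorphic completion of $\widehat{\mathcal{H}}$ at each cusp so as to guarantee cuspidality (rather than just vanishing at $i\infty$). These are standard but notationally heavy; the Deligne step is immediate once membership in $S_{2k+3}(\Gamma_{4M^2,M})$ is established.
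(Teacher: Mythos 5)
Your proposal is correct and follows essentially the same route as the paper: both rest on the modularity of the Rankin--Cohen bracket in weight $\frac32+\frac32+2k=2k+3$, deduce cuspidality of the projection from the fact that $\theta_{1,m,M}$ is a cusp form (the paper phrases this via commutation of $\pi_{\operatorname{hol}}$ and $[\cdot,\cdot]_k$ with slashing and the vanishing constant term of $\theta_{1,m,M}|_{3/2}\gamma$, you via exponential decay at every cusp and Sturm's pairing characterization --- the same underlying fact), and finish with Deligne's bound. The only cosmetic difference is that the paper outsources the modularity of the projected bracket to Mertens' Theorem 4.2 rather than reassembling it from Cohen's theorem and the growth estimates as you do.
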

\begin{proof}
From \cite[Proposition 4.2]{Me}, $\pi_{\operatorname{hol}}([\widehat{\mathcal{H}},\theta_{1,m,M}]_{k})$ is a modular form of weight $2k+3$ on $\Gamma_{4M^2,M}$. To see that it is a cusp form, we follow the same argument as given in \cite{Me} between \cite[(7.2) and (7.3)]{Me}. Namely, since $\theta_{1,m,M}$ is a cusp form and holomorphic projection and the Rankin--Cohen bracket both commute with slashing, for $\gamma\in\SL_2(\Z)$ we have 
\[
\pi_{\operatorname{hol}}\left(\left[\widehat{\mathcal{H}},\theta_{1,m,M}\right]_{k}\right)\big|_{2k+3} \gamma = \pi_{\operatorname{hol}}\left(\left[\widehat{\mathcal{H}}\big|_{\frac{3}{2}}\gamma,\theta_{1,m,M}\big|_{\frac{3}{2}}\gamma\right]_{k}\right),
\]
and the vanishing of the constant term of $\theta_{1,m,M}|_{\frac 32}\gamma$ implies that the holomorphic projection vanishes at every cusp, and is hence a cusp form. 

For the second claim, we use Deligne's bound \cite{Deligne} for cusp forms.
\end{proof}

Using Lemma \ref{lem:piholodd}, we next relate $c_{k,m,M}(n)$ to
\begin{equation*}
F_{k,t}(s):=2^{-2k} \frac{2k+1}{2k+2}\binom{2k}{k} (t-s)^{2k+2}.
\end{equation*}
To state the result, we use $\sum_{s,t}^*$ to denote the sum over $s$ and $t$ where the term with $s=0$ is weighted by $\frac{1}{2}$. 
\begin{lemma}\label{lem:c-F}
We have\footnote{Here the notation $\displaystyle\sum_\pm$ means that we sum over the choice $+$ as well as the choice $-$.} 
\begin{equation*}
c_{k,m,M}(n)=-\frac{1}{2}\sum_{\pm} \pm \sideset{}{^*}{\sum}_{\substack{s,t\in\N_0\\ t^2-s^2=n\\ t\equiv \pm m\pmod{M}}} F_{k,t}(s)+O_{k,M}\left(n^{k+1+\varepsilon}\right).
\end{equation*}
\end{lemma}
\begin{proof}
Plugging Lemma \ref{lem:piholodd} into \eqref{eqn:holprojrelate}, it remains to show that the $n$-th Fourier coefficient of $\pi_{\operatorname{hol}}\left(\left[  \mathcal{H}^{-},\theta_{1,m,M}\right]_{k}\right)$ is 
\[
-\frac{1}{2}\sum_{\pm}\pm \sideset{}{^*}{\sum}_{\substack{s,t\in\N_0\\ t^2-s^2=n\\ t\equiv \pm m\pmod{M}}} F_{k,t}(s)+O_{k,M}\left(n^{k+1+\varepsilon}\right).
\]
To do so, we separately consider  the contribution from the (non-holomorphic) constant term of $\mathcal{H}^-$ and the remaining terms, starting with the constant term. Noting that the $n$-th Fourier coefficient of $\theta_{1,m,M}$ is
\[
\begin{cases} 
t&\text{if }n=t^2\text{ and }t\equiv m\pmod{M}\text{ with } -t\not\equiv m\pmod{M},\\ 
0&\text{otherwise},
\end{cases}
\]
we plug in Lemma \ref{lem:holprojRankinCohen} (1) with $\kappa_1=\kappa_2=\frac{3}{2}$ and $g=\theta_{1,m,M}$ to evaluate
\[
\pi_{\operatorname{hol}}\left(\left[\frac{1}{\sqrt{v}}, \theta_{1,m,M}\right]_{k}\right)=\alpha_{\frac{3}{2},\frac{3}{2},k}\sum_{\pm}\pm\sum_{\substack{s,t\in\N_0\\ t\equiv \pm m\pmod{M}}}t^{2k+2} q^{t^2}.
\]
Hence the $t^2$-th coefficient is $O(n^{k+1})$.

It remains to evaluate the contribution from $\mathcal{H}_0^-(\tau)=\mathcal{H}^-(\tau)-\frac{1}{8\pi\sqrt{v}}$. We use Lemma \ref{lem:holprojRankinCohen} (2) with $\kappa_1=\kappa_2=\frac{3}{2}$, $F=\widehat{\mathcal{H}}$, and $g=\theta_{1,m,M}$. Writing $j=t^2$ and $\ell=s^2$ in the sum defining $b(n)$ in Lemma \ref{lem:holprojRankinCohen} (2), the sum runs over $t^2-s^2=n$ with $t\in\N$ satisfying $t\equiv \pm m\pmod{M}$ and $s\in\N$ (i.e., $n=(t+s)(t-s)$ and using $\Gamma(-\frac12)=-2\sqrt{\pi}$, the $n$-th Fourier coefficient of $\pi_{\operatorname{hol}}([\widehat{\mathcal{H}}_0^-,\theta_{1,m,M}]_k)$ equals
\begin{multline}\label{eqn:c-}
2\sqrt{\pi}\sum_{\pm}\pm \sum_{\substack{s,t\in\N_0\\ t^2-s^2=n\\t\equiv\pm m\pmod{M}}} \sum_{\mu=0}^{k} \binom{k+\frac12}{k-\mu}\binom{k+\frac12}{\mu} t^{2k-2\mu+1} \frac{1}{4\sqrt{\pi}} \left(t^{2\mu-4k-1}P_{3+2k,\frac{1}{2}-\mu}\left(n,s^2\right)-s^{2\mu+1}\right)\\
\hspace{-.1cm}= \frac12\sum_{\pm}\pm \sum_{\substack{s,t\in\N_0\\ t^2-s^2=n\\t\equiv\pm m\pmod{M}}} \sum_{\mu=0}^{k} \binom{k+\frac12}{k-\mu}\binom{k+\frac12}{\mu} t^{2k-2\mu+1} \left(t^{2\mu-4k-1} P_{2k+3,\frac{1}{2}-\mu} \left(n,s^2\right) -s^{2\mu+1}\right).
\end{multline}
By \cite[Proposition 4.2]{OnoSaadSaikia}, we have 
\[
 \sum_{\mu=0}^{k}\binom{k+\frac{1}{2}}{k-\mu}\binom{k+\frac{1}{2}}{\mu} t^{2k-2\mu+1} \left(t^{2\mu-4k-1}P_{2k+3,\frac{1}{2}-\mu}\left(n,s^2\right)-s^{2\mu+1}\right)=F_{k,t}(s),
\]
and thus \eqref{eqn:c-} simplifies to give the claim.
\end{proof}

We next use Lemma \ref{lem:c-F} to rewrite the asymptotics at $c_{\ell,m,M}(n)$ in terms of sums of divisors.
\begin{proposition}\label{prop:cmMkeval}
Let $m\in\Z$, $M\in\N$, and $k\in\N_0$. Then, as $n\to\infty$, for any  $\varepsilon>0$ we have
\[
c_{k,m,M}(n)=-2^{-2k-1} \frac{2k+1}{2k+2}\binom{2k}{k} \sum_{\pm} \pm \sideset{}{^*}{\sum}_{\substack{d\mid n,\ d^2\leq n\\ d+\frac{n}{d}\equiv \pm 2m\pmod{2M}}} d^{2k+2}+O_{k,M,\varepsilon}\left(n^{k+1+\varepsilon}\right),
\]
where $\sum\!{\vphantom{\sum}}^*$ means the term with $d=\sqrt{n}$ (if it occurs) is counted with multiplicity $\frac{1}{2}$. 

\end{proposition}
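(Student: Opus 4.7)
The plan is to transform the sum in Lemma~\ref{lem:c-F} into the divisor sum stated here via the standard substitution $d:=t-s$ and $e:=t+s$.

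First, I would observe that since $t^2-s^2=n$, setting $d=t-s$ and $e=t+s$ produces $de=n$, so both $d$ and $e$ are positive divisors of $n$. The map $(t,s)\mapsto(d,e)$ is a bijection between pairs $(t,s)$ with $t\in\N$, $s\in\N_0$, $t^2-s^2=n$, and ordered factorisations $de=n$ with $d\le e$; the case $d=e$ corresponds uniquely to $s=0$ (and occurs only when $n$ is a perfect square). Inverting gives $t=(d+e)/2$ and $s=(e-d)/2$, so integrality demands $d\equiv e\pmod 2$. The range $s\ge 0$ corresponds precisely to $d\le e$, i.e.\ $d^2\le n$.

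Second, I would translate the congruence. Since $t=(d+n/d)/2$, the condition $t\equiv\pm m\pmod M$ is exactly $d+n/d\equiv\pm 2m\pmod{2M}$. Because $\pm2m$ is even, this congruence automatically enforces $d+n/d\equiv 0\pmod 2$, i.e.\ $d\equiv e\pmod 2$; thus the integrality constraint is already subsumed by the new modular condition, and no separate parity condition needs to be tracked.

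Third, I would substitute into $F_{k,t}(s)$. Since $t-s=d$,
\[
F_{k,t}(s) = 2^{-2k}\,\frac{2k+1}{2k+2}\binom{2k}{k}\, d^{2k+2},
\]
which does not involve $e$, so all constants factor out of the sum. The overall prefactor $-\tfrac12\cdot 2^{-2k}$ combines to $-2^{-2k-1}$, as required. The weighting of $s=0$ by $\tfrac12$ in Lemma~\ref{lem:c-F} matches the weighting of $d=\sqrt n$ by $\tfrac12$ in the present statement. The error term $O_{k,M,\varepsilon}(n^{k+1+\varepsilon})$ is inherited unchanged.

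I do not anticipate any substantive obstacle: the entire argument is a clean change of variables. The one point that genuinely demands care is the bookkeeping at the boundary $d=\sqrt n \leftrightarrow s=0$ together with verifying that the parity constraint on $d+e$ is absorbed into the new congruence on $d+n/d$.
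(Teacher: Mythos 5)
Your proposal is correct and follows essentially the same route as the paper: both substitute $d=t-s$, $t+s=n/d$ in the sum from Lemma \ref{lem:c-F}, note the bijection with divisors $d\mid n$, $d^2\le n$ satisfying $d+\tfrac{n}{d}\equiv\pm 2m\pmod{2M}$, and track the weight at $s=0\leftrightarrow d=\sqrt{n}$. Your extra remark that the parity/integrality constraint is absorbed into the congruence modulo $2M$ is a correct detail the paper leaves implicit.
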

\begin{proof}
By Lemma \ref{lem:c-F}, we have
\begin{equation*}
c_{k,m,M}(n)=-2^{-2k-1} \frac{2k+1}{2k+2}\binom{2k}{k}\sum_{\pm}\pm \sideset{}{^*}{\sum}_{\substack{s,t\in\N_0\\ t^2-s^2=n\\ t\equiv \pm m\pmod{M}}} (t-s)^{2k+2}+O_{k,M}\left(n^{k+1+\varepsilon}\right).
\end{equation*}
We  now write $t-s=d$ and $t+s=\frac{n}{d}\geq d$ (using that $s\geq 0$).  Note that there is a bijection between pairs $(t,s)$ and $d\mid n$ with $d^2\leq  n$ satisfying the congruence $d+\frac{n}{d}\equiv \pm 2m\pmod{2M}$ because $2t=d+\frac{n}{d}$ and $2s=\frac{n}{d}-d$ are uniquely determined from $d$. Using $d+\frac{n}{d}=2t$, this gives the claim.
\end{proof}

\section{Proof of Theorem \ref{thm:OddMoments}}\label{sec:OddMomentsRankinCohen}

Recalling \eqref{eqn:HThetaFourier}, we write 
\begin{equation*}
\left[\mathcal{H},\theta_{1,m,M}\right]_{k}\big| U_4(\tau)=\frac{1}{(2\pi i)^k}\sum_{j=0}^{k} d_{k,j} \left(\mathcal{H}^{(j)} \theta_{1,m,M}^{(k-j)}\right)  \big|U_4 (\tau)= \sum_{n=0}^\infty c_{k,m,M}(4n) q^n,
\end{equation*}
where
\begin{equation*}
d_{k,j}:= (-1)^j\binom{k+\frac{1}{2}}{j}  \binom{k+\frac{1}{2}}{k-j}
\end{equation*}
 are the coefficients from the Rankin--Cohen brackets. We next solve for $H_{2\ell+1,m,M}(n)$ in terms of $c_{k,m,M}(4n)$ and $H_{2k-2\ell+1,m,M}(n)$ with $\ell>0$. For this, define 
\[
\mathcal{C}_k:=\sum_{j=0}^k (-1)^j d_{k,j}.
\]
A straightforward calculation shows the following.
\begin{lemma}\label{lem:HGtilde}
We have 
\[
H_{2k+1,m,M}(n)=\frac{1}{\mathcal{C}_k}c_{k,m,M}(4n) -\frac{1}{\mathcal{C}_k}\sum_{j=1}^{k} d_{k,j} \sum_{\ell=1}^{j}(-1)^{j+\ell} \binom{j}{\ell} (4n)^{\ell}  H_{2k-2\ell+1,m,M}(n).
\]
\end{lemma}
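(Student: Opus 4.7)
The plan is to compute the coefficient $c_{k,m,M}(4n)$ by expanding the product of Fourier series on the left-hand side of the defining equation, and then to linearly solve for $H_{2k+1,m,M}(n)$ in terms of this coefficient and the lower odd moments $H_{2k-2\ell+1,m,M}(n)$ for $\ell \ge 1$. Nothing beyond careful bookkeeping is required.

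First I would write the Fourier expansions
\[
\mathcal{H}^{(j)}(\tau)=\sum_{N_1\ge 0}(2\pi i)^{j}N_1^{j}H(N_1)q^{N_1},\qquad
\theta_{1,m,M}^{(k-j)}(\tau)=\sum_{t\equiv m\pmod M}(2\pi i)^{k-j}t^{2(k-j)+1}q^{t^2},
\]
multiply them and apply $U_4$, and match with $\sum_{N\ge 0}c_{k,m,M}(4N)q^N$. The factors $(2\pi i)^{k}$ cancel the $(2\pi i)^{-k}$ in front, producing
\[
c_{k,m,M}(4n)=\sum_{j=0}^{k}d_{k,j}\sum_{t\equiv m\pmod M}H(4n-t^2)\,(4n-t^2)^{j}\,t^{2k-2j+1}.
\]

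Next I would expand $(4n-t^2)^j$ by the binomial theorem as $\sum_{\ell=0}^{j}(-1)^{\ell}\binom{j}{\ell}(4n)^{j-\ell}t^{2\ell}$ and interchange the $t$-sum with the $\ell$-sum. Because the inner $t$-sum is $\sum_{t\equiv m\pmod M}H(4n-t^2)\,t^{2(k-j+\ell)+1}=H_{2(k-j+\ell)+1,m,M}(n)$, this gives
\[
c_{k,m,M}(4n)=\sum_{j=0}^{k}d_{k,j}\sum_{\ell=0}^{j}(-1)^{\ell}\binom{j}{\ell}(4n)^{j-\ell}H_{2(k-j+\ell)+1,m,M}(n).
\]
The key observation is that the index $2(k-j+\ell)+1$ equals $2k+1$ exactly when $\ell=j$, and the coefficient collected from those diagonal terms is $\sum_{j=0}^{k}(-1)^{j}d_{k,j}=\mathcal{C}_{k}$. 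Isolating this contribution yields
\[
\mathcal{C}_k H_{2k+1,m,M}(n)=c_{k,m,M}(4n)-\sum_{j=1}^{k}d_{k,j}\sum_{\ell=0}^{j-1}(-1)^{\ell}\binom{j}{\ell}(4n)^{j-\ell}H_{2(k-j+\ell)+1,m,M}(n).
\]

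Finally I would substitute $\ell\mapsto j-\ell$ in the inner sum (so the new $\ell$ runs from $1$ to $j$) and use $\binom{j}{j-\ell}=\binom{j}{\ell}$ and $(-1)^{j-\ell}=(-1)^{j+\ell}$; dividing by $\mathcal{C}_k$ (which is nonzero, as it is the value at $x=-1$ of a product of non-vanishing shifted Jacobi-type polynomials, or can be checked directly from the explicit form of $d_{k,j}$) produces the stated formula. The only potential obstacle is the re-indexing, which is purely combinatorial; the non-vanishing of $\mathcal{C}_k$ should be verified, but this is a short calculation using Vandermonde's identity applied to the $d_{k,j}$.
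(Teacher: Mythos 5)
Your proposal is correct and is precisely the ``straightforward calculation'' that the paper omits: expanding the Fourier coefficients of $\frac{1}{(2\pi i)^k}\sum_j d_{k,j}\,\mathcal{H}^{(j)}\theta_{1,m,M}^{(k-j)}\big|U_4$, applying the binomial theorem to $(4n-t^2)^j$, isolating the diagonal $\ell=j$ terms, and re-indexing. The division by $\mathcal{C}_k$ is indeed licit since Vandermonde gives $\mathcal{C}_k=\sum_{j=0}^k\binom{k+\frac12}{j}\binom{k+\frac12}{k-j}=\binom{2k+1}{k}\neq 0$.
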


We next bound $c_{k,m,M}(4n)$.
\begin{lemma}\label{lem:Gtildesmall}
Let $k\in\N_0$, $m\in\Z$, and $M\in\N$. Then we have,  as $n\to\infty$,
\[
c_{k,m,M}(4n)\ll_{k,M,\varepsilon} n^{k+1+\varepsilon}. 
\]
\end{lemma}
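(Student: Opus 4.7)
The plan is to apply Proposition~\ref{prop:cmMkeval} directly with $n$ replaced by $4n$. This immediately reduces the problem to bounding the explicit divisor sum
\[
\Sigma(n):=\sideset{}{^*}{\sum}_{\substack{d\mid 4n,\ d^{2}\leq 4n\\ d+\frac{4n}{d}\equiv \pm 2m\pmod{2M}}} d^{2k+2},
\]
since the error term in Proposition~\ref{prop:cmMkeval} is already $O_{k,M,\varepsilon}(n^{k+1+\varepsilon})$, of exactly the target size.

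To bound $\Sigma(n)$, I would use two elementary inputs. First, the constraint $d^{2}\leq 4n$ forces $d\leq 2\sqrt{n}$, so that each summand satisfies $d^{2k+2}\leq (2\sqrt{n})^{2k+2}=4^{k+1}\,n^{k+1}$. Second, the number of divisors of $4n$ obeys the standard bound $\tau(4n)\ll_{\varepsilon} n^{\varepsilon}$. Dropping the congruence condition and the factor $\tfrac12$ at $d=\sqrt{4n}$ (both of which can only shrink the sum trivially), I combine these two estimates to conclude $\Sigma(n)\ll_{k,\varepsilon} n^{k+1+\varepsilon}$.

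Putting these together with the coefficient $2^{-2k-1}\tfrac{2k+1}{2k+2}\binom{2k}{k}$ (which depends only on $k$) and the error term from Proposition~\ref{prop:cmMkeval}, I obtain
\[
c_{k,m,M}(4n)\ll_{k,M,\varepsilon} n^{k+1+\varepsilon},
\]
as claimed. No step here is substantive; the entire argument is a one-line application of the divisor bound to the formula proved in the previous section, so I do not expect any obstacle.
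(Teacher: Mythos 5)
Your proposal is correct and follows exactly the paper's own argument: apply Proposition~\ref{prop:cmMkeval} at $4n$, bound each summand by $4^{k+1}n^{k+1}$ via $d^2\leq 4n$, and control the number of terms with the divisor bound $\sigma_0(4n)\ll_{\varepsilon}n^{\varepsilon}$. No differences worth noting.
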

\begin{proof}
By Proposition \ref{prop:cmMkeval}, we have
\[
c_{k,m,M}(4n)=-2^{-2k-1} \frac{2k+1}{2k+2}\binom{2k}{k}\sum_{\pm} \pm \sideset{}{^*}{\sum}_{\substack{d\mid 4n,\ d^2\leq 4n\\ d+\frac{4n}{d}\equiv \pm 2m\pmod{2M}}} d^{2k+2}+O_{k,M,\varepsilon}\left(n^{k+1+\varepsilon}\right).
\]
Since $d^2\leq 4n$, we have
\[
d^{2k+2}\leq 4^{k+1} n^{k+1},
\]
so (noting that $d\mid 4n$ and $\sigma_0(n)\ll_{\varepsilon} n^{\varepsilon}$ as $n \to \infty$, where $\sigma_r(n):=\sum_{d\mid n} d^r$)
\[
c_{k,m,M}(4n)\ll_{k,M,\varepsilon} n^{k+1} \sigma_0(4n) + n^{k+1+\varepsilon}\ll_{k,M,\varepsilon} n^{k+1+\varepsilon}.\qedhere
\]
\end{proof}

We are now ready to prove Theorem \ref{thm:OddMoments}.

\begin{proof}[Proof of Theorem \ref{thm:OddMoments}]
By \cite[Lemma 3.7]{BKP}, we have 
\[
H_{m,M}(n)\gg_{\varepsilon} n^{1-\varepsilon}. 
\]
Thus if
\begin{equation}\label{eqn:OddUpper}
H_{2k+1,m,M}(n)=O_{k,M,\varepsilon}\left(n^{k+1+\varepsilon}\right),
\end{equation}
then 
\[
\frac{H_{2k+1,m,M}(n)}{n^{k+\frac{1}{2}}H_{m,M}(n)}=O_{k,M,\varepsilon}\left(n^{-\frac{1}{2}+\varepsilon}\right).
\]
It hence suffices to show \eqref{eqn:OddUpper}. We prove \eqref{eqn:OddUpper} by induction. For $k=0$, Lemma \ref{lem:HGtilde} followed by Lemma \ref{lem:Gtildesmall} implies that
\[
H_{1,m,M}(n)=\frac{1}{\mathcal{C}_0}c_{0,m,M}(4n)\ll_{M,\varepsilon} n^{1+\varepsilon}.
\]
This gives the claim for $k=0$. We next assume the claim for $0\leq j<k$ and use Lemma \ref{lem:HGtilde}. Together with Lemma \ref{lem:Gtildesmall} and the inductive hypothesis, this gives the claim.
\end{proof}

\section{Proof of Theorem \ref{thm:EllipticOdd}}\label{sec:elliptic}

We first recall that by \cite[Lemma 2.2]{BKP}, we have 
\begin{equation}\label{eqn:SHErel}
2S_{\nu,m,M}(p^r) =H_{\nu,m,M}(p^r) + E_{\nu,m,M}(p^r), 
\end{equation}
where 
\begin{multline*}
E_{\nu,m,M}\left(p^r\right):=\delta_{M\mid m}\delta_{\nu=0}\delta_{2\nmid r} H(4p)+ \delta_{M\mid m}\delta_{\nu=0}\delta_{2\mid r} \frac{1}{2}\left(1-\left(\frac{-1}{p}\right)\right)\\
 + \frac{1}{3}\left(1-\left(\frac{-3}{p}\right)\right)p^{\frac{\nu r}{2}}\varrho_{\nu,m,M}(p^r)+ \frac 13 (p-1) 2^{\nu - 2}p^{\frac{\nu r}{2}}\sigma_{\nu,m,M}(p^r)
\end{multline*}
with 
\begin{align*}
\varrho_{\nu,m,M}(p^r)&:=\displaystyle\sum_{\substack{t\equiv m\pmod{M}\\ t^2=p^r}}\sgn(t)^{\nu},&\sigma_{\nu,m,M}(p^r)&:=\displaystyle\sum_{\substack{t\equiv m\pmod{M}\\ t^2=4p^r}}\sgn(t)^{\nu}.
\end{align*}
Analogously to \cite[Lemma 4.2]{BKP}, we require a bound for $E_{2k+1,m,M}(p^r)$ with $k\in\N_0$. 
\begin{lemma}\label{lem:Eboundodd}
For $k\in\N_0$ and $r\in\N$, we have 
\[
E_{2k+1,m,M}(p^r)=\frac{2^{2k-1}}{3}p^{\left(k+\frac{1}{2}\right) r+1}\sigma_{2k+1,m,M}(p^r)+O_k\left(p^{\left(k+\frac{1}{2}\right)r}\right).
\]
 Moreover, 
\[
E_{2k+1,m,M}(p^r)=O_k\left(p^{\left(k+\frac{1}{2}\left(1+\delta_{2|r}\right)\right)r}\right).
\]
\end{lemma}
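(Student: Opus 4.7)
The plan is to exploit the vanishing of the first two summands in the definition of $E_{\nu,m,M}(p^r)$ when $\nu = 2k+1$ is odd (each carries a factor $\delta_{\nu=0}$), which leaves
\[
E_{2k+1,m,M}(p^r) = \tfrac{1}{3}\left(1-\left(\tfrac{-3}{p}\right)\right)p^{(k+\frac{1}{2})r}\varrho_{2k+1,m,M}(p^r)+ \tfrac{1}{3}(p-1) 2^{2k-1}p^{(k+\frac{1}{2})r}\sigma_{2k+1,m,M}(p^r),
\]
and then to bound $\varrho_{2k+1,m,M}$ and $\sigma_{2k+1,m,M}$ trivially.

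Specifically, $\varrho_{2k+1,m,M}(p^r)$ is a sum of $\sgn(t)^{2k+1}=\pm 1$ over the at most two integer solutions $t = \pm p^{r/2}$ to $t^2 = p^r$, and similarly $\sigma_{2k+1,m,M}(p^r)$ sums over $t = \pm 2p^{r/2}$; hence both are bounded by $2$ in absolute value, and they vanish identically whenever $r$ is odd, since $p^r$ is then not a perfect square. The Legendre factor $1-(\tfrac{-3}{p})$ is also bounded by $2$.

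With these facts in hand, for the first equation I would write $(p-1) = p - 1$ in the $\sigma$-summand, isolating the leading term $\frac{2^{2k-1}}{3}p^{(k+\frac{1}{2})r+1}\sigma_{2k+1,m,M}(p^r)$; the remaining piece $-\frac{2^{2k-1}}{3}p^{(k+\frac{1}{2})r}\sigma_{2k+1,m,M}(p^r)$ together with the $\varrho$-summand is then $O_k(p^{(k+\frac{1}{2})r})$ by the bounds just noted. For the second equation I would case split on the parity of $r$: if $r$ is odd, then $\varrho_{2k+1,m,M}(p^r)$ and $\sigma_{2k+1,m,M}(p^r)$ both vanish and so $E_{2k+1,m,M}(p^r)=0$, giving the bound trivially; whereas if $r$ is even, then $r \geq 2$ yields $(k+\tfrac{1}{2})r+1 \leq (k+1)r$, so the leading $\sigma$-term (and hence all of $E_{2k+1,m,M}(p^r)$) lies in $O_k(p^{(k+1)r}) = O_k(p^{(k+\frac{1+\delta_{2\mid r}}{2})r})$. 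I do not anticipate any serious obstacle, since the argument is simple bookkeeping once the $\delta_{\nu=0}$ vanishings are observed and $\varrho_{2k+1,m,M}, \sigma_{2k+1,m,M}$ are bounded trivially.
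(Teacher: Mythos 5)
Your proposal is correct and follows essentially the same route as the paper: drop the two $\delta_{\nu=0}$ terms, bound $\varrho_{2k+1,m,M}$ and $\sigma_{2k+1,m,M}$ trivially by $2$, split $p-1$ to isolate the main term, and handle the second bound by parity of $r$ (your observation that $E_{2k+1,m,M}(p^r)$ vanishes identically for odd $r$ is marginally stronger than the paper's, which only uses $\sigma_{2k+1,m,M}(p^r)=0$, but the argument is the same). No gaps.
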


\begin{proof}
The first two terms in the definition of $E_{2k+1,m,M}(p^r)$ vanish because $2k+1>0$ by assumption. We then note that 
\begin{align}
\label{eqn:varrhobound}\left|\varrho_{\nu,m,M}(p^r)\right|&\leq \displaystyle\sum_{\substack{t\equiv m\pmod{M}\\ t^2=p^r}}\left|\sgn(t)^{\nu}\right|\leq \displaystyle\sum_{\substack{t\equiv m\pmod{M}\\ t^2=p^r}}1\leq 2,\\
\label{eqn:sigmabound}\left|\sigma_{\nu,m,M}(p^r)\right|&\leq\displaystyle\sum_{\substack{t\equiv m\pmod{M}\\ t^2=4p^r}}\left|\sgn(t)^{\nu}\right|= \displaystyle\sum_{\substack{t\equiv m\pmod{M}\\ t^2=4p^r}}1\leq 2.
\end{align}
From \eqref{eqn:varrhobound}, the third term in the definition of $E_{2k+1,m,M}(p^r)$ is $O(p^{(k+\frac{1}{2})r})$. For the final term, we split the factor $p-1$ into two terms, with $p$ giving the main term and $-1$ giving $O_k(p^{(k+\frac{1}{2})r})$ by \eqref{eqn:sigmabound}. This yields the first claim.

If $r$ is odd, then $t^2=4p^r$ is not solvable, thus $\sigma_{2k+1,m,M}(p^r)=0$, giving the second claim. For $r$ even, we have $r\geq 2$, so by \eqref{eqn:sigmabound}
\[
\frac{2^{2k-1}}{3}p^{\left(k+\frac{1}{2}\right) r+1}\sigma_{2k+1,m,M}(p^r)\ll_k p^{\left(k+\frac{1}{2}\right) r+\frac{r}{2}}=O_{k}\left(p^{(k+1)r}\right),
\]
as claimed.
\end{proof}

We are now ready to prove Theorem \ref{thm:EllipticOdd}.
\begin{proof}[Proof of Theorem \ref{thm:EllipticOdd}]
Plugging Lemma \ref{lem:Eboundodd} into \eqref{eqn:SHErel}, we have 
\[
S_{2k+1,m,M}(p^r)=\frac{1}{2} H_{2k+1,m,M}(p^r) +O_k\left(p^{(k+1)r}\right).
\]
We then plug in Theorem \ref{thm:OddMoments} to obtain 
\[
S_{2k+1,m,M}(p^r)=O_{k,M,\varepsilon}\left(p^{(k+1+\varepsilon)r}\right). \qedhere
\]
\end{proof}

\section{Proof of Theorems \ref{thm:SatoTateArithmetic} and \ref{thm:SatoTateArithmetic2}}\label{sec:SatoTate}
In this section, we consider the implications of Theorem \ref{thm:EllipticOdd} to the study of distributions of the trace of Frobenius.

To prove that the weighted averages of $x_E$ for $E/\F_p$ are equidistributed with respect to the Sato--Tate measure as $p\to\infty$, Birch \cite{Birch} computed the moments as 
\[
\sum_{\substack{E/\F_{p}}} \frac{\tr(E)^{2k}}{\omega_E}\sim C_k p^k.
\]
Up to a normalization factor, these match the moments in Lemma \ref{lem:SatoTateMoments}. Following Birch's argument, we are now ready to prove the following weighted version of Theorem \ref{thm:SatoTateArithmetic}.

\begin{theorem}\label{thm:dist1}
Let $m\in\Z$ and $M,r\in\N$. The set $\{x_E:E\in\mathcal{E}_{m,M}(p^r)\}$ becomes equidistributed with respect to the Sato--Tate measure as $p\to\infty$. Specifically, we have 
\[
\lim_{p\to\infty} \operatorname{Pr}_{\Aut}\left(a\leq x_E\leq b: E\in \mathcal{E}_{m,M}(p^r)\right) = \mu_{\ST}([a,b]).
\]
\end{theorem}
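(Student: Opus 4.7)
The plan is to apply the method of moments as described in Section~\ref{sec:momentmethod}. For each prime $p$, define a probability measure $\mu_{p}$ on $[-1,1]$ by
\[
\mu_{p}(X) := \frac{1}{S_{m,M}(p^r)}\sum_{\substack{E\in\mathcal{E}_{m,M}(p^r)\\ x_E\in X}} \omega_E^{-1},
\]
so that $\operatorname{Pr}_{\Aut}(a\le x_E\le b : E\in\mathcal{E}_{m,M}(p^r)) = \mu_p([a,b])$. The goal is to show that $\mu_p$ converges weakly to $\mu_{\operatorname{ST}}$ as $p\to\infty$. Since both measures are supported on the compact interval $[-1,1]$, and $\mu_{\operatorname{ST}}$ is determined by its moments (cf.\ \cite[Theorems 30.1--30.2]{Bil}), it suffices to show that for each $\nu\in\N_0$ the $\nu$-th moment of $\mu_p$ converges to the $\nu$-th moment of $\mu_{\operatorname{ST}}$.

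Using the identity \eqref{eqn:Sratio}, the $\nu$-th moment of $\mu_p$ is
\[
\int_{-1}^1 x^\nu\,d\mu_p(x)=\frac{1}{S_{m,M}(p^r)}\sum_{E\in\mathcal{E}_{m,M}(p^r)}\frac{x_E^\nu}{\omega_E}=\frac{S_{\nu,m,M}(p^r)}{2^\nu p^{r\nu/2}\,S_{m,M}(p^r)}.
\]
For even $\nu=2k$, the already-known even moment asymptotic \eqref{eqn:evenEllipticpr} immediately gives
\[
\frac{S_{2k,m,M}(p^r)}{2^{2k} p^{rk}\,S_{m,M}(p^r)} \longrightarrow \frac{C_k}{2^{2k}} = \mu_{\operatorname{ST},2k}
\]
as $p\to\infty$, where the last equality is Lemma~\ref{lem:SatoTateMoments}. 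For odd $\nu=2k+1$, Theorem~\ref{thm:EllipticOdd} combined with the lower bound \eqref{eqn:SmMlower} for $S_{m,M}(p^r)$ (both summarised as \eqref{eqn:SratioOdd}) gives
\[
\frac{S_{2k+1,m,M}(p^r)}{2^{2k+1}p^{r(k+\frac12)}\,S_{m,M}(p^r)} = O_{k,M,\varepsilon}\!\left(p^{\left(-\tfrac12+\varepsilon\right)r}\right)\longrightarrow 0 = \mu_{\operatorname{ST},2k+1}.
\]

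Thus every moment of $\mu_p$ converges to the corresponding moment of $\mu_{\operatorname{ST}}$. By the method of moments on compactly supported measures, this yields weak convergence $\mu_p\to\mu_{\operatorname{ST}}$. Since $\mu_{\operatorname{ST}}$ has a continuous density and hence assigns zero mass to the endpoints $\{a\}$ and $\{b\}$, the weak convergence is equivalent to pointwise convergence $\mu_p([a,b])\to\mu_{\operatorname{ST}}([a,b])$ for every $-1\le a\le b\le 1$, which is precisely the statement of the theorem. The main (but non-serious) obstacle is simply verifying that the odd-moment input from Theorem~\ref{thm:EllipticOdd} is strong enough: one needs a genuine power savings beyond the trivial bound $O(p^{r(k+3/2-\varepsilon)})$ (as noted in the second remark after Theorem~\ref{thm:EllipticOdd}), which is exactly what Theorem~\ref{thm:EllipticOdd} provides.
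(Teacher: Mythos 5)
Your proposal is correct and follows essentially the same route as the paper: compute the weighted moments via \eqref{eqn:Sratio}, invoke \eqref{eqn:evenEllipticpr} for the even moments and Theorem \ref{thm:EllipticOdd} together with \eqref{eqn:SmMlower} for the odd ones, match against Lemma \ref{lem:SatoTateMoments}, and conclude by the method of moments as in \cite[Theorems 30.1--30.2]{Bil}. Your extra remarks on weak convergence and the continuity of the limiting density are fine but not needed beyond what the paper records.
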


\begin{proof}
 By \eqref{eqn:Sratio}, for $\nu\in\N_0$, the $\nu$-th weighted moment with respect to $x_E$ for $E\in \mathcal{E}_{m,M}(p^r)$ is 
\begin{equation}\label{eqn:munudef}
\mu_{\nu}(p^r):=\frac{1}{S_{m,M}(p^r)}\sum_{E\in \mathcal{E}_{m,M}(p^r)} \frac{x_E^{\nu}}{\omega_E} = \frac{S_{\nu,m,M}(p^r)}{2^{\nu} p^{\frac{r\nu}{2}} S_{m,M}(p^r)}.
\end{equation}
If $\nu$ is odd, then Theorem \ref{thm:EllipticOdd} implies that (see \eqref{eqn:SratioOdd} in particular)
\begin{equation}\label{eqn:muprnuodd}
\lim_{p\to\infty} \mu_{\nu}(p^r)=0.
\end{equation}
If $\nu=2k$ is even, then we plug in \eqref{eqn:evenEllipticpr} to obtain 
\begin{equation}\label{eqn:muprnueven}
\lim_{p\to\infty} \mu_{\nu}(p^r)=\frac{C_{k}}{2^{2k}}.
\end{equation}
Comparing \eqref{eqn:muprnuodd} and \eqref{eqn:muprnueven} with Lemma \ref{lem:SatoTateMoments}, we conclude that
\begin{equation}\label{eqn:eigenlimit}
\lim_{p\to\infty} \mu_{\nu}(p^r)= \delta_{2\mid \nu} \frac{C_{\frac{\nu}{2}}}{2^{\nu}}=\mu_{\operatorname{ST},\nu}.
\end{equation}
Since the moments $\mu_{\nu}(p^r)$ converge to $\mu_{\ST,\nu}$  and the distribution is uniquely determined by its moments \cite[Theorem 30.1 and Theorem 30.2]{Bil} (as pointed out by Birch \cite{Birch},  the Sato--Tate distribution satisfies the necessary conditions), we conclude that 
\[
\lim_{p\to\infty} \operatorname{Pr}_{\Aut}\left(a\leq x_E\leq b: E\in \mathcal{E}_{m,M}(p^r)\right) = \mu_{\ST}([a,b]).\qedhere
\]
\end{proof}

We now compare the weighted and unweighted distributions in order to obtain Theorem \ref{thm:SatoTateArithmetic}.

\begin{proof}[Proof of Theorem \ref{thm:SatoTateArithmetic}]
 Since we take $p\to\infty$, we may assume that $p\geq 5$. In order to obtain the claim, we need to show that
\begin{equation}\label{eqn:unweightedEmMpr}
\lim_{p\to\infty} \frac{1}{\# \mathcal{E}_{m,M}(p^r)} \sum_{E\in \mathcal{E}_{m,M}(p^r)} x_E^{\nu} =\mu_{\operatorname{ST},\nu}.
\end{equation}

By \eqref{eqn:eigenlimit} (plugging in \eqref{eqn:munudef} to the left-hand side), we have
\begin{equation}\label{eqn:dist1conclusion}
\lim_{p\to\infty} \frac{S_{\nu,m,M}(p^r)}{2^{\nu} p^{\frac{r\nu}{2}}S_{m,M}(p^r)}  =\mu_{\operatorname{ST},\nu}.
\end{equation}
Using Lemma \ref{lem:weightedunweighted} and then \eqref{eqn:SmMlower}, we have 
\[
 \frac{S_{\nu,m,M}(p^r)}{2^{\nu} p^{\frac{r\nu}{2}}S_{m,M}(p^r)}
 = \frac{\frac{1}{2}\sum_{E\in\mathcal{E}_{m,M}(p^r)} x_E^{\nu}}{S_{m,M}(p^r)}+O_{\nu,M,\varepsilon}\left(p^{(\varepsilon-1)r}\right).
\]
In the main term, we use Lemma \ref{lem:weightedunweighted} and note that $\sum_{E\in\mathcal{E}_{m,M}(p^r)} 1 =\#\mathcal{E}_{m,M}(p^r)$ to rewrite 
\begin{equation}\label{eqn:SmMprEmMpr}
S_{m,M}(p^r) = \frac{1}{2}\#\mathcal{E}_{m,M}(p^r)+O(1).
\end{equation}
Using \eqref{eqn:SmMlower}, we see that have $\#\mathcal{E}_{m,M}(p^r)\gg_{M,\varepsilon} p^{(1-\varepsilon)r}$ and hence 
\begin{align}
\nonumber \frac{S_{\nu,m,M}(p^r)}{2^{\nu} p^{\frac{r\nu}{2}}S_{m,M}(p^r)}&=\frac{\frac{1}{2}\sum_{E\in\mathcal{E}_{m,M}(p^r)} x_E^{\nu}}{\frac{1}{2}\#\mathcal{E}_{m,M}(p^r) +O(1)}+O_{\nu,M,\varepsilon}\left(p^{(\varepsilon-1)r}\right)\\
\label{eqn:weightedunweightedrewrite} &=\frac{\frac{1}{2}\sum_{E\in\mathcal{E}_{m,M}(p^r)} x_E^{\nu}}{\frac{1}{2}\#\mathcal{E}_{m,M}(p^r)\left(1+ O_{M,\varepsilon}\left(p^{(\varepsilon-1)r}\right)\right)}+O_{\nu,M,\varepsilon}\left(p^{(\varepsilon-1)r}\right).
\end{align}
Expanding the geometric series, we have 
\begin{align*}
\frac{\frac{1}{2}\sum_{E\in\mathcal{E}_{m,M}(p^r)} x_E^{\nu}}{\frac{1}{2}\#\mathcal{E}_{m,M}(p^r)\left(1+ O_{M,\varepsilon}\left(p^{(\varepsilon-1)r}\right)\right)}&= \frac{\frac{1}{2}\sum_{E\in\mathcal{E}_{m,M}(p^r)} x_E^{\nu}}{\frac{1}{2}\#\mathcal{E}_{m,M}(p^r)}\left(1+O_{M,\varepsilon} \left(p^{(\varepsilon-1)r}\right)\right)\\
&= \frac{\sum_{E\in\mathcal{E}_{m,M}(p^r)} x_E^{\nu}}{\#\mathcal{E}_{m,M}(p^r)}+O_{M,\varepsilon} \left(p^{(\varepsilon-1)r}\right),
\end{align*}
where in the last step we use the fact that $|\sum_{E\in\mathcal{E}_{m,M}(p^r)} x_E^{\nu}|\leq \#\mathcal{E}_{m,M}(p^r)$. Plugging back into \eqref{eqn:weightedunweightedrewrite} and rearranging, we obtain that 
\begin{equation*}
 \frac{\sum_{E\in\mathcal{E}_{m,M}(p^r)} x_E^{\nu}}{\#\mathcal{E}_{m,M}(p^r)}= \frac{S_{\nu,m,M}(p^r)}{2^{\nu} p^{\frac{r\nu}{2}}S_{m,M}(p^r)}+O_{\nu,M,\varepsilon} \left(p^{(\varepsilon-1)r}\right).
\end{equation*}
Thus by \eqref{eqn:dist1conclusion} we have
\[
\lim_{p\to\infty}  \frac{\sum_{E\in\mathcal{E}_{m,M}(p^r)} x_E^{\nu}}{\#\mathcal{E}_{m,M}(p^r)}=\mu_{\operatorname{ST},\nu}.\qedhere
\]
\end{proof}

By restricting the sample space to $\mathcal{E}_{m,M}$, Theorem \ref{thm:dist1} investigates the (weighted) conditional probability that the normalized traces lie in a given interval under the assumption that $E\in \mathcal{E}_{m,M}(p^r)$. It is also interesting to consider the distribution of normalized traces $x_E$ from $E\in\mathcal{E}_{m,M}$ within the larger sample space $\mathcal{E}(p^r)$. Equivalently, weighting the probabilities via the reciprocal of the size of the automorphism group as before, we consider the moments 
\begin{align}\nonumber
\frac{\sum_{\substack{E/\F_{p}\\ \tr(E)\equiv m\pmod{M}}} \frac{x_E^{\nu}}{\omega_E}}{S_{1,1}(p)}&= \frac{\frac{1}{2^{\nu}}\sum_{\substack{E/\F_{p}\\ \tr(E)\equiv m\pmod{M}}} \frac{\tr(E)^{\nu}}{\omega_E}}{p^{\frac{\nu}{2}}S_{1,1}(p)}\\
\label{eqn:overallratio}&= \frac{\frac{1}{2^{\nu}}\sum_{\substack{E/\F_{p}\\ \tr(E)\equiv m\pmod{M}}} \frac{\tr(E)^{\nu}}{\omega_E}}{p^{\frac{\nu}{2}}S_{m,M}(p)}  \frac{S_{m,M}(p)}{S_{1,1}(p)}
\end{align}
as $p\to\infty$. Since the limit of the first factor in \eqref{eqn:overallratio} exists by \eqref{eqn:eigenlimit}, the limit as $p\to\infty$ of the the above distribution exists if and only if 
\[
\lim_{p\to\infty}  \frac{S_{m,M}(p)}{S_{1,1}(p)}
\]
exists. The closely-related limits \eqref{eqn:PrEs} were computed in \cite[Theorem 1]{CastryckHubrechts}, where the authors gave a full description of the distribution, so one may obtain the asymptotic behavior by combining Theorem~\ref{thm:dist1} with \cite{CastryckHubrechts}. We restrict ourselves to certain cases where the above limit exists in order to get fixed multiples of the Sato--Tate distribution in the limit. In order to further relate the results in \cite[Theorem 1]{CastryckHubrechts} with the approach used here, we next describe how to determine the limits considered in \cite{CastryckHubrechts} via our method. For this, assume that $p$ lies in a given congruence class modulo $M$. For $j\in\N$ with $\gcd(j,2M)=1$, we therefore set
\[
c_{j,m,M}:=\lim_{\substack{p\to\infty\\ p\equiv j\pmod{4M^2}}} \frac{S_{m,M}(p)}{S_{1,1}(p)}.
\]
We reprove the conclusion of \cite[Theorem 1]{CastryckHubrechts}  that these limits all exist.
 
\begin{proposition}\label{prop:limexists}
For $m\in\Z$, $M\in\N$, and $j\in\N$ with $\gcd(j,2M)=1$, the limit $c_{j,m,M}$ exists.
\end{proposition}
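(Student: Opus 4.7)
First, I would reduce the question from $S_{m,M}(p)$ to $H_{m,M}(p)$. By the identity \eqref{eqn:SHErel} with $\nu = 0$, we have $2 S_{m,M}(p) = H_{m,M}(p) + E_{m,M}(p)$, where the error term $E_{m,M}(p)$ can be controlled directly. Specifically, for $p \geq 5$ prime, neither $p$ nor $4p$ is a perfect square, so $\varrho_{0,m,M}(p) = \sigma_{0,m,M}(p) = 0$, leaving only $E_{m,M}(p) = \delta_{M \mid m} H(4p) = O(p^{\frac{1}{2}+\varepsilon})$. Combined with the lower bound \eqref{eqn:SmMlower}, this means it suffices to show that $H_{m,M}(p)/H_{1,1}(p)$ has a limit as $p \to \infty$ through $p \equiv j \pmod{4M^2}$.

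Next, I would establish an asymptotic formula for $H_{m,M}(p)$ by adapting the Rankin--Cohen/holomorphic projection strategy of Sections \ref{sec:OddMoments}--\ref{sec:OddMomentsRankinCohen} to the ``zero-moment'' case $\nu = 0$, replacing the cusp form $\theta_{1,m,M}$ by the weight-$\tfrac{1}{2}$ theta function $\theta_{0,m,M}$. The product $\widehat{\mathcal{H}} \cdot \theta_{0,m,M}$ is a non-holomorphic modular form of weight $2$ on $\Gamma_{4M^2,M}$, and its holomorphic Fourier coefficients (after applying $\,|\,U_4$) are exactly $H_{m,M}(n)$. Applying holomorphic projection in weight $2$ and decomposing into Eisenstein and cuspidal parts, the cusp-form contribution to the $p$-th coefficient is $O(p^{\frac{1}{2}+\varepsilon})$ by Deligne, while the non-holomorphic correction (from the difference $\widehat{\mathcal{H}} - \mathcal{H}$, handled by analogues of Lemma \ref{lem:holprojRankinCohen}) produces only a divisor-sum term of the same order. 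The dominant contribution comes from the Eisenstein projection, whose $p$-th Fourier coefficient is a linear combination of terms $\chi(p)\,p$ for Dirichlet characters $\chi$ of conductor dividing $4M^2$. One therefore expects an asymptotic
\[
H_{m,M}(p) = A_{j,m,M}\cdot p + O_{M,\varepsilon}\bigl(p^{\frac{1}{2}+\varepsilon}\bigr) \qquad \text{for } p \equiv j \pmod{4M^2},
\]
with $A_{j,m,M}$ depending only on $j,m,M$. The same analysis with $(m,M)=(0,1)$ gives $H_{1,1}(p) = A_{j,0,1}\,p + O(p^{\frac{1}{2}+\varepsilon})$, and \cite[Lemma 3.7]{BKP} forces $A_{j,0,1} \neq 0$. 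Taking the ratio yields the existence of $c_{j,m,M} = A_{j,m,M}/A_{j,0,1}$.

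The principal obstacle is handling holomorphic projection in weight $2$. Lemma \ref{lem:holprojRankinCohen} requires $2k - 2 \geq \kappa_1 + \kappa_2$, which fails for $k = 0$, $\kappa_1 = \tfrac{3}{2}$, $\kappa_2 = \tfrac{1}{2}$. In weight $2$ the Sturm / Gross--Zagier projection needs regularization owing to convergence issues at the cusps, and the Eisenstein subspace contains the quasimodular Eisenstein series $E_2$, which must be tracked carefully. One would either develop a regularized projection (subtracting off the known quasimodular pieces) or, alternatively, bypass projection entirely by using the classical Hurwitz--Kronecker class number relation, opened up by Dirichlet characters modulo $M$, with main term arising from character-twisted divisor sums. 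Either route then reduces the existence of the limit to the routine observation that twisted divisor sums at a prime $p$ depend only on $p\pmod{4M^2}$.
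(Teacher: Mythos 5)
Your plan is correct and follows essentially the same route as the paper: reduce $S_{m,M}(p)$ to $H_{m,M}(p)$ via \eqref{eqn:SHErel}, realize $H_{m,M}(p)$ as the $p$-th coefficient of a weight-two form built from $\mathcal{H}\cdot\theta_{0,m,M}$, split into Eisenstein and cuspidal parts, bound the cuspidal part by Deligne, and observe that the Eisenstein coefficients at primes depend only on $p\pmod{4M^2}$. The weight-$2$ obstruction you flag is resolved in the paper exactly as you suggest, by adding a correction term so that $\left(2\left[\mathcal{H},\theta_{m,M}\right]_0+\Lambda_{1,m,M}\right)\big|U_4$ is a weight-two quasimodular form (citing \cite[Lemmas 3.1--3.3]{BKP}), with $\Lambda_{1,m,M}|U_4$ contributing only $O(p^{\frac12+\varepsilon})$.
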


\begin{remark}
The limits are not in general independent of $j$, so the overall limit over all $p$ does not (in general) exist. 
\end{remark}

Before proving Proposition \ref{prop:limexists}, we conclude a weighted version of Theorem \ref{thm:SatoTateArithmetic2}.

\begin{theorem}\label{thm:dist2}
Let $m\in\Z$ and $M\in\N$. Then for each $j\in\Z$ with $\gcd(j,2M)=1$ there exists a positive constant $c_{j,m,M}$ such that for $-1\leq a\leq b\leq 1$ we have 
\[
\lim_{\substack{p\to\infty\\ p\equiv j\pmod{4M^2}}} \operatorname{Pr}_{\Aut}\left(a\leq x_E\leq b\text{ and }E\in \mathcal{E}_{m,M}(p):E\in \mathcal{E}(p)\right) = c_{j,m,M}\mu_{\ST}([a,b]).
\]
\end{theorem}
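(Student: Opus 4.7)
The plan is to factor the weighted probability on the full sample space $\mathcal{E}(p)$ into (i) a conditional weighted probability on the restricted sample space $\mathcal{E}_{m,M}(p)$ and (ii) the weighted mass of $\mathcal{E}_{m,M}(p)$ inside $\mathcal{E}(p)$, then invoke Theorem \ref{thm:dist1} for factor (i) and Proposition \ref{prop:limexists} for factor (ii).

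Concretely, unfolding the definition of $\operatorname{Pr}_{\Aut}$ from Section \ref{sec:weightedist} and multiplying and dividing by $\sum_{E\in\mathcal{E}_{m,M}(p)}\omega_E^{-1}=S_{m,M}(p)$, one immediately obtains
\[
\operatorname{Pr}_{\Aut}\!\left(a\leq x_E\leq b \text{ and } E\in \mathcal{E}_{m,M}(p):E\in \mathcal{E}(p)\right)
= \operatorname{Pr}_{\Aut}\!\left(a\leq x_E\leq b : E\in \mathcal{E}_{m,M}(p)\right)\cdot \frac{S_{m,M}(p)}{S_{1,1}(p)}.
\]
The first factor on the right does not depend on the residue class of $p$ modulo $4M^2$; by Theorem \ref{thm:dist1} applied with $r=1$ it converges to $\mu_{\ST}([a,b])$ as $p\to\infty$ without any congruence restriction whatsoever. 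The second factor is, by the very definition given in the paragraph preceding Proposition \ref{prop:limexists}, the ratio whose limit equals $c_{j,m,M}$ as $p \to \infty$ along the progression $p \equiv j \pmod{4M^2}$. Multiplying the two limits gives the desired $c_{j,m,M}\mu_{\ST}([a,b])$.

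The only non-trivial input beyond Theorem \ref{thm:dist1} is the existence of the second limit, i.e.\ Proposition \ref{prop:limexists}; this is the main obstacle and must be established separately, presumably by using the identity \eqref{eqn:SHErel} to pass from $S_{m,M}(p)$ to $H_{m,M}(p)$ modulo explicit error terms, and then evaluating $H_{m,M}(p)$ along the arithmetic progression $p\equiv j\pmod{4M^2}$. The $j$-dependence then enters through the splitting behavior of $p$ in $\Q(\sqrt{-1})$ and $\Q(\sqrt{-3})$ (which controls the ``correction'' terms appearing in $E_{0,m,M}(p)$) together with the congruence conditions on $t\pmod{M}$ implicit in the definition of $H_{m,M}$, reproducing the dependence on $p\pmod{4M^2}$ found in \cite[Theorem 1]{CastryckHubrechts}. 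Once Proposition \ref{prop:limexists} is granted, Theorem \ref{thm:dist2} is an immediate consequence of the factorization displayed above.
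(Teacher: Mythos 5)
Your factorization
\[
\operatorname{Pr}_{\Aut}\left(a\leq x_E\leq b \text{ and } E\in \mathcal{E}_{m,M}(p):E\in \mathcal{E}(p)\right)
= \operatorname{Pr}_{\Aut}\left(a\leq x_E\leq b : E\in \mathcal{E}_{m,M}(p)\right)\cdot \frac{S_{m,M}(p)}{S_{1,1}(p)}
\]
is an exact algebraic identity (the denominator $S_{m,M}(p)$ is nonzero for large $p$ by \eqref{eqn:SmMlower}), and combining Theorem \ref{thm:dist1} with Proposition \ref{prop:limexists} via the product rule for limits does yield the claim, so the argument is correct. It is, however, a slightly different route from the paper's: the paper never factors the probability itself but instead factors the \emph{moments} as in \eqref{eqn:overallratio}, computes their limits from \eqref{eqn:eigenlimit} and Proposition \ref{prop:limexists}, observes that they match the moments of $c_{j,m,M}\mu_{\ST}$, and then invokes the moment-determinacy theorem of \cite[Theorems 30.1 and 30.2]{Bil} a second time. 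Your version is more economical at this final step, since it reuses Theorem \ref{thm:dist1} as a black box and needs no further appeal to the method of moments; the paper's version is more self-contained at the level of moments and makes the role of \eqref{eqn:overallratio} explicit. Both proofs share the same essential inputs, and you correctly identify Proposition \ref{prop:limexists} as the genuinely nontrivial ingredient that must be supplied separately. One small caveat on your speculative sketch of that proposition: in the paper the dependence on $j$ arises from the coefficients $\varphi(j)p+\psi(j)$ of the weight-two Eisenstein series in the decomposition of $2[\mathcal{H},\theta_{m,M}]_0$, not from the correction terms in $E_{0,m,M}(p)$, which are only $O(p^{1/2+\varepsilon})$ and get absorbed into the error; but since you explicitly defer that proposition, this does not affect the validity of your proof of Theorem \ref{thm:dist2}.
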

\begin{proof}
Plugging Proposition \ref{prop:limexists} and \eqref{eqn:eigenlimit} into \eqref{eqn:overallratio}, we obtain 
\[
\lim_{\substack{p\to\infty\\ p\equiv j\pmod{4M^2}}} \frac{\sum_{\substack{E/\F_{p}\\ \tr(E)\equiv m\pmod{M}}} \frac{x_E^{\nu}}{\omega_E}}{S_{1,1}(p)} =\begin{cases}
c_{j,m,M} \frac{C_k}{2^{2k}} &\text{if }\nu=2k\text{ is even},\\
0&\text{if }\nu\text{ is odd}.
\end{cases}
\]
By Lemma \ref{lem:SatoTateMoments}, this matches the $\nu$-th moment of $c_{j,m,M}$ times the Sato--Tate measure. Since the distribution is uniquely determined by its moments, as in \cite[Theorem 30.1 and Theorem 30.2]{Bil}, 
\[
\lim_{\substack{p\to\infty\\ p\equiv j\pmod{4M^2}}} \operatorname{Pr}_{\Aut}\left(a\leq x_E\leq b\text{ and }E\in \mathcal{E}_{m,M}(p): E\in\mathcal{E}(p)\right)=c_{j,m,M}\mu_{\ST}([a,b]).
\]
Hence for $E\in\mathcal{E}_{m,M}(p)$ within the larger space $\mathcal{E}(p)$, we see that $x_E$ is equidistributed in $[-1,1]$ with respect to $c_{j,m,M}$ times the Sato--Tate measure as $p\to\infty$ with $p\equiv j\Pmod{4M^2}$. This gives the claim.
\end{proof}

We next conclude Theorem \ref{thm:SatoTateArithmetic2} from Theorem \ref{thm:dist2}.

\begin{proof}[Proof of Theorem \ref{thm:SatoTateArithmetic2}]
Since the $\nu$-th moment of $c \mu_{\operatorname{ST}}$ is $c \mu_{\operatorname{ST},\nu}$ for $c\in\R$, the claim is equivalent to showing that 
\begin{equation}\label{eqn:SatoTateArithmetic2Equiv}
\lim_{\substack{p\to\infty\\ p\equiv j\pmod{4M^2}}}  \frac{1}{\#\mathcal{E}(p)} \sum_{E\in \mathcal{E}_{m,M}(p)} x_E^{\nu} = c_{j,m,M} \mu_{\operatorname{ST},\nu}.
\end{equation}
By \eqref{eqn:unweightedEmMpr}, we have 
\[
\lim_{p\to\infty} \frac{1}{\#\mathcal{E}_{m,M}(p)}\sum_{E\in \mathcal{E}_{m,M}(p)} x_E^{\nu}=\mu_{\operatorname{ST},\nu}.
\]
Thus \eqref{eqn:SatoTateArithmetic2Equiv} is equivalent to 
\begin{equation}\label{eqn:SatoTateArithmetic2Equiv2}
\lim_{\substack{p\to\infty\\ p\equiv j\pmod{4M^2}}}\frac{\#\mathcal{E}_{m,M}(p)}{\#\mathcal{E}(p)}  = c_{j,m,M}.
\end{equation}
To show \eqref{eqn:SatoTateArithmetic2Equiv2}, we use \eqref{eqn:SmMprEmMpr} to rewrite 
\[
\frac{\#\mathcal{E}_{m,M}(p)}{\#\mathcal{E}(p)}  =\frac{S_{m,M}(p)+O(1)}{S_{1,1}(p)+O(1)}.
\]
We then plug in \eqref{eqn:SmMlower} to obtain 
\[
\frac{S_{m,M}(p)+O(1)}{S_{1,1}(p)+O(1)}= \frac{S_{m,M}(p)}{S_{1,1}(p)} \frac{1+O_{M,\varepsilon}\left(p^{\varepsilon-1}\right)}{1+O_{\varepsilon}\left(p^{\varepsilon-1}\right)}=\frac{S_{m,M}(p)}{S_{1,1}(p)}\left(1+O_{M,\varepsilon}\left(p^{\varepsilon-1}\right)\right).
\]
By Proposition \ref{prop:limexists}, if $p\equiv j\Pmod{4M^2}$, then the right-hand side becomes
\[
c_{j,m,M}\left(1+o_{M}(1)\right),
\]
yielding \eqref{eqn:SatoTateArithmetic2Equiv2}, and hence the claim.
\end{proof}
We finally prove Proposition \ref{prop:limexists}.
\begin{proof}[Proof of Proposition \ref{prop:limexists}]
We claim that there exist constants $a_{j,m,M}>0$ and $b_{j,m,M}\in\C$ only depending on $j,m,M$ such that for $p\equiv j\Pmod{4M^2}$ we have 
\begin{equation}\label{eqn:SmMp}
S_{m,M}(p)=a_{j,m,M} p + b_{j,m,M}+O_{m,M}\left(p^{\frac{1}{2}+\varepsilon}\right)=a_{j,m,M} p +O_{j,m,M}\left(p^{\frac{1}{2}+\varepsilon}\right).
\end{equation}
Assuming \eqref{eqn:SmMp}, for $p\equiv j\Pmod{4M^2}$ we have 
\[
\frac{S_{m,M}(p)}{S_{1,1}(p)} = \frac{a_{j,m,M} p +O_{j,m,M}\left(p^{\frac{1}{2}+\varepsilon}\right)}{a_{j,1,1} p+O_{j,m,M}\left(p^{\frac{1}{2}+\varepsilon}\right)}= \frac{a_{j,m,M} }{a_{j,1,1} } +O_{j,m,M}\left(p^{-\frac{1}{2}+\varepsilon}\right).
\]
Thus
\[
c_{j,m,M}=\lim_{\substack{ p\to\infty\\ p\equiv j\pmod{4M^2}}} \frac{S_{m,M}(p)}{S_{1,1}(p)} =\frac{a_{j,m,M} }{a_{j,1,1}}>0
\]
exists and is positive because $a_{j,m,M}$ and $a_{j,1,1}$ are positive. It therefore remains to prove \eqref{eqn:SmMp}. Plugging \cite[Lemma 4.2]{BKP} (note that $\varrho_{m,M}(p)=0$ because $p$ is not a square) into \eqref{eqn:SHErel} yields
\[
S_{m,M}(p) =\frac{1}{2}H_{m,M}(p) + O\left(p^{\frac{1}{2}+\varepsilon}\right).
\]
This reduces \eqref{eqn:SmMp} to showing the same claim for $H_{m,M}(p)$. By \cite[Lemma 3.2]{BKP}, we have $H_{m,M}(p)=G_{m,M}(p)$ and $G_{m,M}(p)$ is the $p$-th coefficient of $2\left[\mathcal{H},\theta_{m,M}\right]_0$ by \cite[Lemma 3.1]{BKP}. By \cite[Lemma 3.3]{BKP}, we have that 
\[
f:=\left(2\left[\mathcal{H},\theta_{m,M}\right] + \Lambda_{1,m,M}\right)\big|U_4
\]
is a weight two quasimodular form on $\Gamma_{4M^2,M}$. The $n$-th coefficient of $\Lambda_{1,m,M}|U_4$ is $O_{m,M}(n^{\frac{1}{2}+\varepsilon})$ because $t^2-s^2=4n$ implies that $t-s\leq 2\sqrt{n}$. 

Thus it remains to show that the $p$-th coefficient of $f$ has the desired shape. We write 
\[
f=E+g
\]
where $g$ is a cusp form and $E$ is in the space spanned by Eisenstein series (which are spanned by $E_2$ and holomorphic modular forms that are orthogonal to cusp forms)
 Using the bound of Deligne \cite{Deligne}, the $p$-th coefficient of $g$ is bounded by $O(p^{\frac{1}{2}+\varepsilon})$. To compute the $p$-th coefficient of $E$, we use a basis given by $E_2$ and a basis for the space of holomorphic Eisenstein series of weight two. We first note that, by \cite[(1.7)]{OnoBook},  
\[
M_2\left(\Gamma_{4M^2,M}\right) \subseteq M_2\left(\Gamma_1\left(4M^2\right)\right) \cong \bigoplus_{\chi}M_{2}\left(\Gamma_0\left(4M^2\right),\chi\right),
\]
where $\chi$ runs over characters modulo $4M^2$. A basis for the space of Eisenstein series in\\$M_2(\Gamma_0(4M^2),\chi)$ may be found in \cite[Theorem 4.6.2]{DiamondShurman}. These are enumerated by pairs of characters $\psi$ and $\varphi$ modulo $4M^2$ for which $\psi\varphi=\chi$ and $t\in\N$ such that $c(\psi)c(\varphi)t\mid4M^2$, where $c(\psi)$ is the conductor. Writing $E_{2,\psi,\varphi,t}$ for the corresponding Eisenstein series, there exists constants $\lambda_{\psi,\varphi,t}$ such that (note that in \cite[Theorem 4.6.2]{DiamondShurman}, the case $(\psi,\varphi,1)$ is omitted, but this precisely corresponds to a multiple of $E_2$, so this is included for our quasimodular Eisenstein series)
\begin{equation}\label{eqn:Eexpand}
E= \sum_{\substack{\psi,\varphi\pmod{4M^2}}} \sum_{t\mid \frac{4M^2}{c(\psi)c(\varphi)}} \lambda_{\psi,\varphi,t} E_{2,\psi,\varphi,t}.
\end{equation}
If either $\psi$ or $\varphi$ is non-trivial, then the $n$-th Fourier coefficient of $E_{2,\psi,\varphi,1}$ is given by $\sigma_{1,\psi,\varphi}(n)$, where
\[
\sigma_{k,\psi,\varphi}(n):=\sum_{d\mid n} \varphi(d)\psi\left(\frac{n}{d}\right) d^{k}
\]
and $E_{2,\psi,\varphi,t}:=E_{2,\psi,\varphi,1}\big|V_t$. Since $p$ is coprime to $4M^2$, for $\psi\varphi$ non-trivial only those with $t=1$ may contribute to the sum. Since $p$ is prime, the sum has at most two summands. Moreover, since $p\equiv j\Pmod{4M^2}$ and $\varphi$ and $\chi$ are characters modulo $4M^2$, we have
\[
\sigma_{1,\psi,\varphi}(p)= \varphi(p)p +\psi(p) = \varphi(j) p +\psi(j). 
\]
For $\psi$ and $\varphi$ both trivial and $t\neq 1$, we have that
\[
E_{2,1,1,t}:=E_2-t E_2\big|V_t
\]
while $E_{2,1,1,1}:=E_2$. Again using the fact that $p$ and $4M^2$ are coprime, the $p$-th coefficient of $E_{2,1,1,t}$ is $-24(p+1)$. 
Plugging into \eqref{eqn:Eexpand}, for $p\equiv j\Pmod{4M^2}$, the $p$-th Fourier coefficient of $E$ is 
\begin{multline*}
\sum_{\substack{\psi,\varphi\pmod{4M^2}\\ (\psi,\varphi)\neq (1,1)}} \sum_{t\mid \frac{4M^2}{c(\psi)c(\varphi)}} \lambda_{\psi,\varphi,t} \left(\varphi(j) p +\psi(j)\right) -24 \sum_{t\mid 4M^2} \lambda_{1,1,t}(p+1)\\
=\left(\sum_{\substack{\psi,\varphi\pmod{4M^2}\\ (\psi,\varphi)\neq (1,1)}} \sum_{t\mid \frac{4M^2}{c(\psi)c(\varphi)}} \lambda_{\psi,\varphi,t} \varphi(j) -24\sum_{t\mid 4M^2} \lambda_{1,1,t}\right)p\\
 + \sum_{\substack{\psi,\varphi\pmod{4M^2}\\ (\psi,\varphi)\neq (1,1)}} \sum_{t\mid \frac{4M^2}{c(\psi)c(\varphi)}} \lambda_{\psi,\varphi,t} \psi(j) -  24\sum_{t\mid 4M^2} \lambda_{1,1,t}.
\end{multline*}
The two sums only depend on $m$, $M$, and $j$, yielding the claim.
\end{proof}

\end{document}